\newtheorem*{lemma}{Lemma}
\newtheorem*{proposition}{Proposition}
\newtheorem*{thm}{Theorem}
\newtheorem*{corollary}{Corollary}
\theoremstyle{remark}
    \newtheorem*{remark}{Remark}
\theoremstyle{definition}
    \newtheorem*{definition}{Definition}
    \newcommand{\Nat}   {\mathbb{N}}            \newcommand{\Ent}   {\mathbb{Z}}
    \newcommand{\set}[1]{\left\{#1\right\}}     
    \newcommand{\cat}[1]{\mathscr{#1}}              \renewcommand{\cal}[1]{\cat{#1}}
    \newcommand{\kk}    {\ensuremath{\mathbf{k}}}
    \newcommand{\To}    {\longrightarrow}
  \renewcommand{\mod}   {\operatorname{mod}}
    \newcommand{\Hom}   {\operatorname{Hom}}    \newcommand{\End}   {\operatorname{End}}
        \newcommand{\rad}   {\operatorname{rad}}
    \newcommand{\ind}   {\operatorname{ind}}    \newcommand{\add}   {\operatorname{add}}
      \newcommand{\soc}   {\operatorname{soc}}
    \newcommand{\Ext}   {\operatorname{Ext}}    
    \newcommand{\D}     {\operatorname{D}}      
                    \newcommand{\Ann}   {\operatorname{Ann}}
    \newcommand{\projd} {{\rm pd}\,}            
    \newcommand{\gldim} {\operatorname{gl.dim.}}\newcommand{\repdim}{\operatorname{rep.dim.}}
  \renewcommand{\le}        {\leqslant}              \renewcommand{\ge}     {\geqslant}
    \newcommand{\Clu}   {\mathcal{C}}              \newcommand{\Tensor} {\operatorname{T}}
    \renewcommand{\vec}[1]   {\protect\vv{#1}}   
    \newcommand{\ivec}[1]   {\text{\reflectbox{\ensuremath{\protect\vv{\text{\reflectbox{$#1$}}}}}}}   
    \newcommand{\lra}{\To}
\newcommand{\bdim}{\operatorname{\mathbf{dim}}}
\newcommand{\op}{\operatorname{op}}
\newcommand{\cC}{\mathcal{C}}
\newcommand{\cP}{\mathcal{P}}
\newcommand{\cR}{\mathcal{R}}
\newcommand{\cX}{\mathcal{X}}
        \def\theequation{\thesection.\@arabic\c@equation}
        \def\dotsd{\,\mbox{}\mathinner{\mkern1mu\raise17\p@
            \vbox{\kern13\p@\hbox{.}}\mkern2mu
            \raise14\p@\hbox{.}\mkern2mu\raise11\p@\hbox{.}\mkern1mu}\mbox{}\,}
        \def\dotst{\,\mathinner{\mkern1mu\raise11\p@
            \vbox{\kern13\p@\hbox{.}}\mkern2mu
            \raise14\p@\hbox{.}\mkern2mu\raise17\p@\hbox{.}\mkern1mu}\,}
\begin{document}

\title{The Representation Dimension of a Selfinjective Algebra of Wild Tilted Type}
\author{Ibrahim Assem}
    \address{D\'epartement de math\'ematiques, Facult\'e des sciences, Universit\'e de Sherbrooke,
            Sherbrooke, Qu\'ebec J1K 2R1, Canada.}
\author{Andrzej Skowro\'nski}
    \address{Faculty of Mathematics and Computer Science, Nicolaus Copernicus University,
           Chopina 12/18, 87-100 Toru\'n, Poland.}
\author{Sonia Trepode}
    \address{Departamento de Matem\'atica, Facultad de Ciencias Exactas y Naturales, Funes 3350,
            Universidad Nacional de Mar del Plata. Conicet, UNMDP, 7600 Mar del Plata, Argentina.}

\author{}

\begin{keyword}
   representation dimension \sep selfinjective algebras \sep tilted algebras \sep wild type \sep wild algebras
   \MSC16G20 \sep  16G60 \sep 16G70 \sep 18G20
\end{keyword}

\begin{abstract}
    We prove that the representation dimension of a selfinjective algebra of wild tilted
    type is equal to three, and give an explicit construction of an Auslander generator of its module category.
    We also show that if a connected selfinjective algebra admits an acyclic generalised standard Auslander-Reiten component then its representation dimension is equal to three.
\end{abstract}

\maketitle

\section{Introduction}

Our objective in this paper is to explore the relation between the representation theory of an algebra, or more precisely the shape of its Auslander-Reiten components, and its homological invariants. We are in particular interested here in the representation dimension of an algebra, introduced by Auslander in \cite{A}, which measures in some way the complexity of the morphisms of the module category. There were several attempts to understand, or compute, this invariant, see, for instance, \cite{A}, \cite{EHIS}, \cite{ACW}. Special attention was given to algebras of representation dimension three. The reason for this interest is two-fold. Firstly, it is related to the finitistic dimension conjecture: Igusa and Todorov have proved that algebras of representation dimension three have a finite finitistic dimension \cite{IT}. Secondly, because Auslander's expectation was that the representation dimension would measure how far an algebra is from being representation-finite, there is a standing conjecture that the representation dimension of a tame algebra is at most three. Indeed, while there exist algebras of arbitrary \cite{Ro}, but finite \cite{I}, representation dimension, most of the best understood classes of algebras have representation dimension three. This is the case, for instance, for algebras obtained by means of tilting, such as tilted algebras \cite{APT}, iterated tilted algebras \cite{CHU} and quasitilted algebras \cite{O}. This is also the case for classes of selfinjective algebras related to the ones obtained via tilting, such as trivial extensions of iterated tilted algebras \cite{CP} and selfinjective algebras of euclidean type \cite{AST}. In both of these cases, the algebra considered is the orbit algebra of the repetitive algebra of some tilted algebra under the action of an infinite cyclic group of automorphisms.

It was then natural to consider next the class of selfinjective algebras of wild tilted type, introduced and studied in \cite{EKS}. A selfinjective algebra $A$ is called of \emph{wild tilted type} if $A$ is the orbit category of the repetitive category $\hat{B}$, in the sense of \cite{HW}, of a tilted algebra $B$ of wild type, under the action of an infinite cyclic group of automorphisms. Our first main theorem may now be stated.

\vskip 0.3cm

\noindent
{\bf Theorem A.}  \emph{Let $A$ be a connected selfinjective algebra of wild tilted type. Then $\repdim A = 3$.}

\vskip 0.3cm

Because the definition of our class is similar to that of the one considered in \cite{AST}, we are able to follow the same general strategy of proof as in that paper. In particular, our proof is constructive and we are able to explicitly describe an Auslander generator of the module category of $A$. However, because we are dealing with wild algebras, the necessary constructions are different.

Returning to our basic problem of relating the shape of Auslander-Reiten components to the representation dimension, we are led to consider the case of selfinjective algebras having an acyclic generalised standard component. We recall than an Auslander-Reiten component $\Gamma$ is called  \emph{generalised standard} \cite{S} whenever, for two modules $X,Y$ in $\Gamma$, we have $\rad_A^{\infty}(X,Y) = 0$, so that morphisms can be computed locally in that component. As an easy consequence of Theorem A and the results of \cite{AST}, \cite{SY}, we obtain the following nice and unexpected result.

\vskip 0.3cm

\noindent
{\bf Theorem B.} \emph{Let $A$ be a connected selfinjective algebra admitting an acyclic generalised standard Auslander-Reiten component. Then $\repdim A = 3.$}

\vskip 0.3cm

Thus, in this case, the good behaviour of an Auslander-Reiten component in the module category suffices to determine the representation dimension.

We now describe the contents of the paper. After an introductory section 2 in which we briefly fix the notation and recall useful facts, our section 3 is devoted to wild quasiserial algebras which will play the role of our building blocks  and section 4 is devoted to the gluings of such algebras. After recalling in section 5 necessary facts on repetitive categories of wild tilted algebras, we prove in section 6 our Theorem A using Galois coverings and we deduce Theorem B. Finally, the last section 7 is devoted to examples.

\section{Preliminaries}

\subsection{Notation}

Let $\kk$ be an algebraically closed field. By algebra $A$, we mean a basic, connected, associative finite dimensional $\kk$-algebra with an identity. It is well-known that there exists a connected bound quiver $Q_A$ and an admissible ideal $I$ of the path algebra $\kk Q_A$ such that $A \cong \kk Q_A/ I$,  see, for instance \cite{ASS}.
Equivalently, $A$ may be considered as a $\kk$-category whose objects are the points of $Q_A$,
and the set of morphisms $A(x,y)$ from $x$ to $y$ is the quotient of the $\kk$-vector space $\kk Q_A(x,y)$  having as basis all paths from $x$ to $y$ by the subspace $I(x,y) = I \cap \kk Q_A (x,y)$, see~\cite{BG}.
A full subcategory $C$ of $A$ is \emph{convex} if,
    for each path $\xymatrix@1@C=12pt{x_0 \ar[r]& x_1\ar[r]& \cdots \ar[r]& x_n}$
    in $A$ with $x_0, x_n$ in $C$,
    we have $x_i$ in $C$ for each $i$.
The algebra $A$ is called \emph{triangular} if $Q_A$ is acyclic.

By $A$-module,  we mean a finitely generated right $A$-module.
We denote by $\mod A$ their category and
   by $\ind A$ a full subcategory consisting of a complete set of representatives of
   the isoclasses (isomorphism classes) of indecomposable $A$-modules.
For a point $x$ in  $Q_A$, we denote by
 $P(x)$, or $I(x)$, or $S(x)$
 the corresponding indecomposable projective, or indecomposable injective, or simple $A$-module.
 We denote by $\projd M$  the projective dimension of a module $M$. The global dimension of $A$ is then denoted by  $\gldim A$.
 Given a module $M$, the additive full subcategory of all direct summands of finite
 direct sums of $M$ is denoted by  $\add M$.
 Given two full subcategories $\cal{C}, \cal{D}$ of $\ind A$, such that $\Hom_A(M,N) = 0$ for all $M$ in $\cal{C}$, $N$ in $\cal{D}$,  the notation  $\cal D \vee \cal C$ represents the full subcategory of $\ind A$ having as object class the union of the object classes of $\cal{C}$ and $\cal{D}$.
Finally, $\D = \Hom_{\kk}(-,\kk)$ denotes the usual duality between $\mod A$ and $\mod A^{\rm{op}}$.

A \emph{path} in $\ind A$ from $M$ to $N$ is a sequence of nonzero morphisms
   \[\xymatrix@C=18pt{M = M_0 \ar[r]& M_1 \ar[r]&   \cdots  \ar[r]& M_t = N} \tag{*}\]
with all $M_i$ indecomposable.
We then say that $M$ is a \emph{predecessor} of $N$, or that $N$ is a \emph{successor} of $M$. We define similarly predecessor and successor of a class $\cal{C}$ of $\mod A$ (for instance, of an Auslander-Reiten component): we say that $M$ is a predecessor (or successor) of $\cal{C}$ if there exists $N$ in $\cal{C}$ such that $M$ precedes $N$ (or succedes it, respectively).

We use freely properties of the Auslander-Reiten quiver $\Gamma(\mod A)$ of $A$ and tilting theory, for which we refer to \cite{ASS}, \cite{ARS}, \cite{SS1}, \cite{SS2}.
Points in $\Gamma(\mod A)$ are identified with the corresponding indecomposable $A$-modules. Similarly, (parts of) components of $\Gamma(\mod A)$ are identified with the corresponding full subcategories of $\ind A$.

\subsection{Representation dimension}

    Let $A$ be a non-semisimple algebra. We recall from \cite{A} that its \emph{representation dimension} $\repdim A$ is the infimum of
    the global dimensions of the algebras $\End M$, where the module $M$ is at the same time a generator
    and a cogenerator of $\mod A$. Clearly, if $M$ is a generator and a cogenerator of $\mod A$,
then it can be written as $M=A\oplus \D A\oplus M'$, for some $A$-module $M'$.
If $M$ is a generator and cogenerator of $\mod A$  and moreover $\repdim A =\gldim\End M$,
   then $M$ is called an \emph{Auslander generator} for $\mod A$. Thus, all indecomposable projective and all indecomposable injective $A$-modules are direct summands of any Auslander generator of $\mod A$.

We now state a criterion for an algebra to have representation dimension at most three.
Let $M$ be any $A$-module. Given an $A$-module $X$,
a morphism $f_0: M_0 \lra X$  with $M_0 \in  \operatorname{add }M$ is an \emph{$\add M$-approximation} if,
   for any morphism $f_1: M_1 \to X$ with $M_1 \in \add M$, there exists $g: M_1 \to M_0$ such that $f_1 = f_0g$:
   \[\xymatrix@R=6pt{
                     M_1\ar@{.>}[dd]_g
                        \ar[drr]^{f_1}  &&    \\
                                       &&   X~.\\
                     M_0\ar[urr]^{f_0} &&   }
   \] An $\add M$-approximation is \emph{(right) minimal} if each morphism $g: M_0 \to M_0$ such that $f_0g= f_0$ is an isomorphism. A short exact sequence
    \[\xymatrix{ 0\ar[r]    &   M_1\ar[r]   &   M_0\ar[r]^{f_0} &   X\ar[r] &   0   }\]
with $M_1$, $M_0\in\add M$ is an \emph{$\add M$-approximating sequence} if $f_0:M_0\to X$ is an $\add M$-approximation of $X$. If moreover $f_0:M_0\to X$ is minimal, then it is called a \emph{minimal $\add M$-approximating sequence}. It is proved in \cite{ACW}(1.7), \cite{AST} (2.3) that if $X$ admits an $\add M$-approximating sequence, then it admits a minimal $\add M$-approximating sequence, which is moreover a direct summand of any $\add M$-approximating sequence. The following lemma which combines \cite{AST}(2.2) and (2.4) is central to our considerations.

\begin{lemma}
\begin{enumerate}
\item[(a)] A short exact sequence
 \[\xymatrix{ 0\ar[r]   &   M_1\ar[r]   &   M_0\ar[r]^{f_0} &   X\ar[r] &   0   }\] with $M_0, M_1 \in \add M$ is a minimal $\add M$-approximating sequence if and only if the induced sequence of functors
 \[\xymatrix{0\ar[r]& \Hom_A(-,M_1)|_{\add M} \ar[r]& \Hom_A(-,M_0)|_{\add M} \ar[r]& \Hom_A(-,X)|_{\add M} \ar[r]& 0}\]
 is exact.
 \item [(b)] $\gldim \End M \leq 3$ if and only if each $A$-module admits a (minimal) $\add M$-approximating sequence.
     Moreover, in this case, $\repdim A \leq 3$.
\end{enumerate}
\end{lemma}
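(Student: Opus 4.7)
The plan is to deduce both parts by combining the standard arguments of \cite{AST}(2.2) and (2.4). The main technical device throughout is the Yoneda-type correspondence between $\add M \subset \mod A$ and the full subcategory of finitely generated projective $\End M$-modules, given by $N \mapsto \Hom_A(-,N)|_{\add M}$, under which morphisms in $\add M$ and morphisms of the associated representable functors are in bijection.

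For part~(a), I would fix $M' \in \add M$ and apply $\Hom_A(M',-)$ to the short exact sequence $0 \to M_1 \to M_0 \to X \to 0$. Left exactness of $\Hom_A(M',-)$ immediately yields exactness of
\[
0 \lra \Hom_A(M',M_1) \lra \Hom_A(M',M_0) \lra \Hom_A(M',X),
\]
so the only additional content is surjectivity of the rightmost arrow, which holds for every $M' \in \add M$ if and only if $f_0$ is an $\add M$-approximation of $X$. Letting $M'$ vary, this is precisely exactness of the stated sequence of functors on $\add M$. The minimality of $f_0$ corresponds, under Yoneda, to the two-term complex $\Hom_A(-,M_1)|_{\add M} \to \Hom_A(-,M_0)|_{\add M}$ being a minimal projective presentation of the functor $\Hom_A(-,X)|_{\add M}$, which is the extra content carried by the word ``minimal''.

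For part~(b), the plan is to run Auslander's standard argument in both directions. Assume first that every $A$-module admits an $\add M$-approximating sequence. Any finitely presented $\End M$-module $F$ arises as the cokernel of a morphism between finitely generated projectives; lifting via Yoneda gives a morphism $g \colon N_1 \to N_0$ in $\add M$. Set $X = \coker g$; by hypothesis $X$ admits an $\add M$-approximating sequence $0 \to M_1 \to M_0 \to X \to 0$, which by part~(a) translates into a two-term exact sequence of projective $\End M$-modules ending at $\Hom_A(-,X)|_{\add M}$. Splicing this with the original presentation of $F$ yields a projective resolution of $F$ of length at most three, so $\gldim \End M \le 3$. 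The converse direction lifts a projective resolution of $\Hom_A(-,X)|_{\add M}$ via Yoneda back to an $\add M$-approximating sequence for $X$. The bound $\repdim A \le 3$ is then immediate from the definition, since $M$ contains $A \oplus \D A$ as a summand and is therefore a generator-cogenerator of $\mod A$.

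The principal obstacle is the Yoneda bookkeeping: one must verify that exactness and minimality transport faithfully between approximating sequences in $\mod A$ and projective presentations of finitely presented functors over $\End M$, and that controlling the projective dimension of each finitely presented $\End M$-module suffices to bound $\gldim \End M$ itself. Once these translations are in place, both (a) and (b) reduce to essentially formal diagram chasing, and the generator-cogenerator assumption on $M$ enters only to make the Yoneda correspondence and the definition of $\repdim A$ applicable.
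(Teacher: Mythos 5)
The paper offers no proof of this lemma at all: it is imported verbatim from \cite{AST}(2.2) and (2.4), so your attempt can only be compared with the standard Auslander-type argument that those references carry out, and your overall strategy (projectivization via $N\mapsto \Hom_A(-,N)|_{\add M}$) is indeed that argument. Part (a) and the direction ``$\gldim\End M\le 3$ implies approximating sequences exist'' are essentially right, with two glosses worth flagging: exactness of the functor sequence characterises $\add M$-approximating sequences whether or not they are minimal, so the minimality claim needs the separate observation (from \cite{ACW}(1.7), \cite{AST}(2.3)) that a minimal approximating sequence exists and is a direct summand of any approximating sequence; and in the converse direction of (b) you must use that $M$ is a cogenerator, taking an injective copresentation $0\to X\to I_0\to I_1$ to exhibit $\Hom_A(-,X)|_{\add M}$ as a second syzygy and hence of projective dimension at most $1$, before lifting its presentation back to $\mod A$ (where the generator property gives surjectivity of $M_0\to X$).

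The genuine error is in the forward half of (b). From a projective presentation $\Hom_A(-,N_1)\xrightarrow{\Hom(-,g)}\Hom_A(-,N_0)\to F\to 0$ you set $X=\coker g$ and propose to splice a resolution of $\Hom_A(-,X)|_{\add M}$ onto the presentation of $F$. This step fails: the natural map $F\to \Hom_A(-,X)|_{\add M}$ induced by $N_0\twoheadrightarrow X$ is in general neither injective nor surjective (a morphism $M'\to \Ima g$ need not lift to $N_1$, and a morphism $M'\to X$ need not lift to $N_0$), so a projective resolution of $\Hom_A(-,X)|_{\add M}$ gives no control on $F$. The correct choice is $X=\ker g$: left exactness of each $\Hom_A(M',-)$ yields the exact sequence $0\to \Hom_A(-,X)|_{\add M}\to \Hom_A(-,N_1)|_{\add M}\to \Hom_A(-,N_0)|_{\add M}\to F\to 0$, so that $\operatorname{pd}F\le 2+\operatorname{pd}\Hom_A(-,X)|_{\add M}\le 3$ once $X$ admits an $\add M$-approximating sequence. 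With that substitution, and the two clarifications above, your argument closes.
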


\subsection{Tilted Algebras}

Let $A$ be an algebra.
An $A$-module $T$ is a \emph{tilting module} if $\projd_A T \leq 1$,  $\Ext^1_A(T,T)= 0$
and the number of isoclasses of indecomposable summands of $T$ equals the rank of the Grothendieck group $K_0 (A)$ of $A$.
Let $Q$ be a finite, connected and acyclic quiver and $\kk Q$ its path algebra. An algebra is \emph{tilted of type $Q$} if there exists a tilting $\kk
Q$-module $T$ such that $A = \End_{\kk Q} T$.

We now state a well-known criterion for an algebra to be tilted. We recall that an $A$-module $M$ is \emph{sincere} if $\Hom_A(P,M) \not= 0$ for any projective $A$-module $P$. It is \emph{convex} if for any path

\[\xymatrix@C=18pt{M_0 \ar[r] & M_1 \ar[r] &    \cdots  \ar[r] & M_t} \tag{*}\] in $\ind A$ with $M_0,M_1 \in \add M$ we have $M_i \in \add M$ for all $i$. Finally, $M$ is a \emph{slice module} if it is sincere, convex, and for any almost split sequence $\xymatrix@1@C=12pt{0 \ar[r]& X \ar[r]& Y \ar[r]& Z \ar[r]& 0}$ in $\mod A$, at most one of $X,Z$ lies in $\add M$, and  moreover, if an indecomposable summand of $Y$ lies in $\add M$ then exactly one of $X,Z$ lies in $\add M$. The full subquiver of $\Gamma(\mod A)$ generated by the summands of a slice module is a \emph{complete slice}, see \cite{Ri2}. It is then shown, see, for instance \cite{Ri}, that $A$ is tilted if and only if it admits a slice module (or, equivalently, a complete slice).

The representation dimension of any tilted algebra is at most three. Actually, we have the following facts, proved in \cite{APT}, see also \cite{AST}(2.5).

\begin{lemma}
    Let $A$ be a tilted algebra, and $T$ a slice module.
    Then
    \begin{enumerate}[\indent(a)]
        \item If $X$ is an $A$-module generated by $T$, then there exists a minimal
            $\add(T \oplus \D A)$-approximating sequence for $X$ of the form
                \[\xymatrix@1{0 \ar[r]& T_1 \ar[r]& T_0 \oplus I_0 \ar[r]& X \ar[r]& 0}\]
            with $T_1$, $T_0 \in \add T$ and $I_0$ injective.
        \item The module $M = A \oplus \D A \oplus T$ is an Auslander generator for $\mod A$ and $\repdim A\le3$.
    \end{enumerate}
\end{lemma}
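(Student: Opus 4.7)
The plan is to apply Brenner--Butler tilting theory to the slice module $T$. By the standard characterization of tilted algebras via slice modules, the algebra $B = \End_A(T)^{\op}$ is hereditary, $T$ is naturally a tilting $B$-module, and $A \cong \End_B(T)^{\op}$. The torsion pair $(\mathcal{T}(T),\mathcal{F}(T))$ in $\mod A$, with $\mathcal{T}(T) = \operatorname{Gen}(T)$, corresponds under Brenner--Butler to a torsion pair $(\mathcal{X}(T),\mathcal{Y}(T))$ in $\mod B$, and $\Hom_A(T,-)$ restricts to an equivalence $\mathcal{T}(T) \xrightarrow{\sim} \mathcal{Y}(T)$ with quasi-inverse $-\otimes_B T$. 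Since $X$ is generated by $T$, we have $X \in \mathcal{T}(T)$ and so $\Hom_A(T,X) \in \mathcal{Y}(T)$. As $B$ is hereditary, this $B$-module admits a projective resolution of length at most one, which, because indecomposable projective $B$-modules are precisely the $\Hom_A(T,T_x)$ for $T_x$ an indecomposable summand of $T$, lifts to an exact sequence
\[0 \to \Hom_A(T,T_1) \to \Hom_A(T,T_0) \to \Hom_A(T,X) \to 0\]
with $T_0,T_1 \in \add T$. Applying $-\otimes_B T$ and comparing with the canonical torsion--torsion-free decomposition of the image yields a four-term exact sequence in $\mod A$ whose deviation from the desired three-term form is controlled by $\Tor_1^B(\Hom_A(T,X),T)$. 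This $\Tor$ term lies in $\mathcal{F}(T)$ and is cogenerated by injectives, so it contributes a summand $I_0 \in \add \D A$ to the middle term, producing the sequence
\[0 \to T_1 \to T_0 \oplus I_0 \to X \to 0.\]

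\textbf{Approximation property and minimality.} To confirm that this is an $\add(T \oplus \D A)$-approximating sequence, I invoke the criterion of the Lemma above, part (a): it is enough to verify exactness of $\Hom_A(-,?)$ applied to the sequence on each indecomposable summand $?$ of $T \oplus \D A$. For $? \in \add T$ this reduces to the rigidity $\Ext_A^1(T,T_1) = 0$ of a tilting module; for $? \in \add \D A$ it is immediate from injectivity of $?$. Passage to a minimal approximating sequence is then automatic by the cited results of ACW and AST quoted just before the Lemma.

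\textbf{Part (b).} The criterion of the Lemma above, part (b), reduces the bound $\repdim A \le 3$ to exhibiting an $\add M$-approximating sequence for every $A$-module $X$, where $M = A \oplus \D A \oplus T$. Decomposing $X$ along the torsion pair as $0 \to tX \to X \to X/tX \to 0$, the torsion submodule $tX \in \mathcal{T}(T)$ is handled by part (a). For the torsion-free quotient $X/tX \in \mathcal{F}(T)$, a dual construction --- using that under Brenner--Butler $\mathcal{F}(T)$ corresponds to $\mathcal{X}(T) \subseteq \mod B$, whose objects are cogenerated by injectives and hence admit approximations by summands of $\add(A \oplus \D A)$ --- yields the required sequence. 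Splicing gives the $\add M$-approximating sequence for $X$; since $M$ is both a generator and a cogenerator of $\mod A$, it is consequently an Auslander generator.

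\textbf{Main obstacle.} The technically subtle point is the error-term analysis in part (a): one must establish that the deviation introduced by applying $-\otimes_B T$ to a $B$-projective resolution contributes an injective summand only to the middle term of the resulting sequence and nothing to the left term. This identification hinges on the specific features of slice modules controlling the connecting component of a tilted algebra, and is essentially the content of the APT construction.
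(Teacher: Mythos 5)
The paper itself does not prove this lemma---it is quoted from [APT] (see also [AST](2.5))---so the only thing to assess is whether your reconstruction of that argument succeeds, and it does not: there is a genuine gap exactly at the point you yourself label ``the main obstacle''. The Brenner--Butler setup is fine ($B=\End_A(T)$ hereditary, a length-one projective resolution of $\Hom_A(T,X)$ over $B$, transport by $-\otimes_BT$), but your mechanism for producing the injective summand $I_0$ is incorrect. Since $X\in\mathcal{T}(T)$, the module $\Hom_A(T,X)$ lies in $\mathcal{Y}(T)$, and by the very definition of $\mathcal{Y}(T)$ one has $\Tor_1^B(\Hom_A(T,X),T)=0$; the ``error term'' you invoke vanishes identically, so the transported sequence is simply $0\to T_1\to T_0\to X\to 0$ with no injective appearing. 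Even if that Tor group were nonzero, it would sit at the left end of the four-term sequence (inside the kernel) and could not ``contribute a summand to the middle term''.

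The summand $I_0$ is forced not by homological bookkeeping but by the approximation property, and this is where your verification also fails: the assertion that exactness of $\Hom_A(I,-)$ on the sequence is ``immediate from injectivity of $I$'' is backwards. Injectivity of $I$ lets maps \emph{into} $I$ extend along monomorphisms; it does not let a map $I\to X$ lift along the epimorphism $T_0\to X$. The obstruction to such a lifting lies in $\Ext^1_A(I,T_1)$, which is in general nonzero for injectives $I$ lying in $\operatorname{Gen}(T)$, and handling precisely these maps---by enlarging the middle term by a suitable injective while keeping the kernel in $\add T$---is the actual content of the [APT] construction, which your proposal defers to rather than supplies. Part (b) has secondary problems of the same kind: splicing approximating sequences for $tX$ and $X/tX$ is not automatic (the class of modules admitting $\add M$-approximating sequences is not obviously closed under extensions; compare the horseshoe-type argument in Proposition 3.2 of the paper), and the torsion-free case should instead use that the torsion pair determined by a complete slice is splitting, that $\Hom_A(T,Y)=0$ for $Y$ torsion-free, and that $\projd Y\le 1$ for predecessors of the slice, none of which you invoke.
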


\subsection{Tilted algebras of wild type}

We need to recall a few results from \cite{EKS} on tilted algebras of wild type. A tilted algebra of type $Q$ is
\emph{concealed} of type $Q$ if it is the endomorphism algebra of a tilting $\kk Q$-module all of whose summands belong to the postprojective (equivalently, preinjective) component of $\Gamma(\mod \kk Q)$.

A \emph{(truncated) branch} in a point $a$ is any finite, connected, full bound subquiver, containing $a$, of the following infinite tree bound by all possible relations of the form $\alpha\beta = 0$, see \cite{Ri},
%
%
$$
 \xymatrix@C=1pc@R=1pc{
   \vdots && \vdots && \vdots  && \vdots && \vdots && \vdots && \vdots  && \vdots \\
   \bullet \ar[rd]_{\alpha} && \bullet && \bullet \ar[rd]_{\alpha} && \bullet &&
     \bullet \ar[rd]_{\alpha} && \bullet && \bullet \ar[rd]_{\alpha} && \bullet\\
   & \bullet \ar[ru]_{\beta} \ar[rrdd]_{\alpha} &&&& \bullet  \ar[ru]_{\beta} & &&
     & \bullet \ar[ru]_{\beta} \ar[rrdd]_{\alpha} &&&& \bullet  \ar[ru]_{\beta} &\\
   \\
   &&& \bullet \ar[rruu]_{\beta} \ar[rrrrdddd]_{\alpha} &&& && &&& \bullet \ar[rruu]_{\beta} && \\
   \\
   \\ \\
   &&& &&&& \bullet \ar[rrrruuuu]_{\beta}\\
 }
$$
Let $A = {\kk Q}_A/I$ be a bound quiver algebra and $A' = {\kk Q'}/I'$ a full convex subcategory of $A$ having $a$ as source (or target). Then $A$ is obtained from $A'$ by \emph{rooting} a branch ${\kk Q''}/I''$ in $a$ provided  $(Q'',I'')$ is a full bound subquiver of $(Q,I)$ such that $Q'_0 \cap Q''_0 = \{a\}$, $Q_0 = Q'_0 \cup Q''_0$ and $I$ is generated by $I'$ and $I''$.

We define ray and coray modules.  Let $\cal{C}$ be an Auslander-Reiten component. An indecomposable module $X$ in $\cal{C}$ is a \emph{ray module} if there exists an infinite sectional path
%
%
 $$ X= X_0 \longrightarrow X_1 \longrightarrow \cdots \longrightarrow X_t \longrightarrow \cdots$$ such that, for any $i \geq 0$, the subpath  $X = X_0 \longrightarrow X_1 \longrightarrow     \cdots  \longrightarrow X_i$ is the unique sectional path of length $i$ with source in $X$. \emph{Coray modules} are defined dually.

 Let now $C$ be a wild concealed algebra and $K_1, \cdots K_m$ be branches. An algebra $B$ is a \emph{branch extension} of $C$ if there exists a sequence $B_0 = C, B_1, \cdots B_m = B$ of algebras such that, for each $j$, the algebra $B_{j+1}$ is obtained from the one-point extension $B_j[E_{j+1}]$, where $E_{j+1}$ is a ray $B_j$-module, by rooting the branch $K_{j+1}$ at the extension point. \emph{Branch coextensions} of wild concealed algebras are defined dually.

The following result is \cite{EKS}(2.8).

\begin{proposition}
Let $Q$ be a connected wild quiver, $T$ a tilting $\kk Q$-module and $B = \End_{\kk Q} T$.
\begin{enumerate}[\indent(a)]
\item If $T$ has no preinjective direct summand, then there exists a  wild concealed quotient algebra $C$ of $B$ whose postprojective component is the unique postprojective component of $\Gamma(\mod B)$. Moreover, $B$ is a branch extension of $C$.
 \item   If $T$ has no postprojective direct summand, then there exists a  wild concealed quotient algebra $C$ of $B$ whose preinjective component is the unique preinjective component of $\Gamma(\mod B)$. Moreover, $B$ is a branch coextension of $C$.
\end{enumerate}
\end{proposition}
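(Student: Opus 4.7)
The plan is to prove (a); part (b) follows by duality. Since $\kk Q$ is wild hereditary, its Auslander-Reiten quiver $\Gamma(\mod \kk Q)$ consists of one postprojective component $\mathcal{P}$, one preinjective component, and infinitely many regular components, each of shape $\mathbb{Z}A_\infty$. As $T$ has no preinjective direct summand, I would begin by writing $T = T' \oplus T''$ where $T'$ collects the postprojective summands of $T$ and $T''$ the regular summands.

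To produce the wild concealed quotient $C$, I would look inside the postprojective component $\mathcal{P}$ for a postprojective tilting $\kk Q$-module $\widetilde{T}$ whose summands carry the information of $T'$ plus enough additional postprojective completions, using Bongartz-type completion together with the abundance of modules in $\mathcal{P}$ satisfying prescribed $\Ext^1$-orthogonality conditions. Setting $C = \End_{\kk Q}\widetilde{T}$ yields a wild concealed algebra. Using the compatibility between the tilting functors $\Hom_{\kk Q}(T,-)$ and $\Hom_{\kk Q}(\widetilde{T},-)$ on the subcategory of $\mathcal{P}$ generated by the common summand $T'$, I would identify the unique postprojective component of $\Gamma(\mod C)$ with the unique postprojective component of $\Gamma(\mod B)$, and exhibit the quotient map $B \to C$ by killing the idempotents of $B$ corresponding to the regular summands of $T$ (together with the branch idempotents introduced in the next step).

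To realise $B$ as a branch extension of $C$, I would handle the regular $\kk Q$-components meeting $T''$ one at a time. For each such $\mathcal{R}_\lambda$, the finite configuration of $T$-summands in $\mathcal{R}_\lambda$, together with the connecting maps from $T'$, transports via $\Hom_{\kk Q}(T,-)$ into a truncated branch rooted at a ray module of the intermediate algebra $B_j$. Enumerating these contributions produces a chain $C = B_0, B_1, \ldots, B_m = B$ in which each $B_{j+1}$ is obtained from $B_j$ by a one-point extension $B_j[E_{j+1}]$ at a ray $B_j$-module followed by rooting a branch $K_{j+1}$, exactly as required by the definition of branch extension.

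The principal obstacle lies in this last step: one must verify that each $\mathcal{R}_\lambda$-contribution really is a truncated subquiver of the fixed infinite bound tree defining branches, and that it is rooted at an honest ray module of $B_j$. This requires careful combinatorial control of the possible shapes of tilting configurations inside $\mathbb{Z}A_\infty$ components of wild hereditary algebras --- showing in particular that these configurations are tree-shaped and sectional --- together with tracking how the ray modules in the postprojective component of $C$ evolve through the successive one-point extensions. Once this combinatorial input is in place, the inductive construction assembling $B$ from $C$ and the identification of the quotient are essentially forced.
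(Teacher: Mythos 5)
The paper itself offers no proof of this statement: it is quoted as \cite{EKS}(2.8) from Erdmann--Kerner--Skowro\'{n}ski, so the only meaningful comparison is with the argument given there, which rests on Kerner's structure theory of regular components of wild hereditary algebras and on perpendicular-category reductions. Measured against that, your proposal contains a genuine gap, and it sits exactly where you put the whole weight of the construction, namely in the definition of $C$.

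Your recipe --- complete the postprojective part $T'$ of $T$ to a postprojective tilting $\kk Q$-module $\widetilde{T}$ by Bongartz-type completion and set $C=\End_{\kk Q}\widetilde{T}$ --- cannot produce the algebra the proposition asserts to exist. First, $\End_{\kk Q}\widetilde{T}$ has exactly $|Q_0|$ simple modules, whereas the concealed quotient of a proper branch extension has $|Q_0|-\sum_j|K_j|<|Q_0|$ of them; in the paper's Example~7.1 the quiver $Q$ has eleven vertices while $C$ lives on the six objects $3,\dots,8$, and the unique postprojective component of $\Gamma(\mod B^+)$ accordingly has six, not eleven, $\tau$-orbits. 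So $C$ is in general concealed of a type different from $Q$ and with fewer simples. Second, the Bongartz complement of a postprojective partial tilting module need not be postprojective, and even if a postprojective completion existed, its new summands are not summands of $T$, so $\End_{\kk Q}\widetilde{T}$ is not a full convex subcategory (hence not an idempotent quotient) of $B$. Third, your proposed quotient map, killing the idempotents of all regular summands of $T$, is both inconsistent with $C=\End_{\kk Q}\widetilde{T}$ (the numbers of objects do not match) and incorrect in itself: some regular summands of $T$ survive as objects of $C$, and only those lying strictly inside the wings of the rigid regular configuration become branch vertices. The correct identification of $C$, together with the fact that the remaining regular summands organise into truncated branches rooted at ray modules, is precisely the content of Kerner's results on rigid configurations in the $\Ent\mathbb{A}_{\infty}$-components of a wild hereditary algebra (quasi-length of rigid regular modules bounded by $n-2$, wing decomposition) combined with perpendicular-category techniques. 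Your final paragraph correctly names this as the principal obstacle but defers it entirely, so the proposal does not yet amount to a proof.
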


 As an illustration, and because this picture will become useful in the sequel, we show the shape of the Auslander-Reiten quiver of a tilted algebra $B$ which is of type $(a)$ above (that is, the endomorphism algebra of a tilting module without preinjective direct summands)
%
\[
\xy
0;/r.5pc/:
,
(-10,-5)="2a" ;
(-8,-11)="2b" ;
(10,-5)="4a" ;
(8,-11)="4b" ;
(0,0)
,{\ellipse(7.5,7.5){-}},*{\cR_B}
;p+(-10,-3)
;p+(-10,0)*\dir{}="b",**\dir{-},"b"
;p+(-2,2)*\dir{}="b",**\dir{-},"b"
;p+(0,1)*\dir{}="b",**\dir{-},"b"
;p+(6,0)*{\cP_B = \cP_C};p+(-6,0)
;p+(0,1)*\dir{}="b",**\dir{-},"b"
;p+(2,2)*\dir{}="b",**\dir{-},"b"
;p+(10,0)*\dir{}="b",**\dir{-},"b"
;p+(10,-3)
;p+(23,4)
;p+(-3,0)*\dir{}="b",**\dir{-},"b"
;p+(-1,1)*\dir{}="b",**\dir{-},"b"
;p+(-10,0)*\dir{}="b",**\dir{-},"b"
;p+(7,0)
;p+(-1.5,-3)*\dir{}="b",**\dir{-},"b"
;p+(1,0)*\dir{}="b",**\dir{-},"b"
;p+(-3.5,-7)*\dir{}="b",**\dir{-},"b"
;p+(-3,0)
;p+(6,3.5)*{\Sigma};p+(-6,-3.5)
;p+(8,-1)*{\Clu_B};p+(-8,1)
;p+(10,0)*\dir{}="b",**\dir{-},"b"
;p+(1,1)*\dir{}="b",**\dir{-},"b"
;p+(3,0)*\dir{}="b",**\dir{-},"b"
 ;p+(0,3)
 ;p+(-4.5,0)*\dir{}="b",**\dir{-},"b"
 ;p+(-.5,1)*\dir{}="b",**\dir{-},"b"
 ;p+(1,2)*\dir{}="b",**\dir{-},"b"
 ;p+(4,0)*\dir{}="b",**\dir{-},"b"
 ;p+(0,-6)
;p+(13,4)
 ,{\ellipse(10,8):a(210),,=:a(120){-}}
 ;p+(-5,0)*{\cX_B};p+(5,0)
;p+(-13,-4)
;p+(-23,4)
;p+(-6,-1)
 ;p+(12,0)*\dir{}="b",**\dir{-},"b"
 ;p+(-11,0)
 ;p+(-1,-1)
 ;p+(1,1)*\dir{}="b",**\dir{-},"b"
 ;p+(5.5,-5.5)*\dir{}="b",**\dir{-},"b"
 ;p+(-5.5,5.5)
 ;p+(-.5,-2.5)
 ;p+(2.5,2.5)*\dir{}="b",**\dir{-},"b"
 ;p+(5,-5)*\dir{}="b",**\dir{-},"b"
 ;p+(-7,5)
 ;p+(0,-4)
 ;p+(4,4)*\dir{}="b",**\dir{-},"b"
 ;p+(5,-5)*\dir{}="b",**\dir{-},"b"
 ;p+(-9,5)
 ;p+(1,-5)
 ;p+(5,5)*\dir{}="b",**\dir{-},"b"
 ;p+(4,-4)*\dir{}="b",**\dir{-},"b"
 ;p+(-10,4)
 ;p+(3,-5)
 ;p+(5,5)*\dir{}="b",**\dir{-},"b"
 ;p+(2.5,-2.5)*\dir{}="b",**\dir{-},"b"
 ;p+(-10.5,2.5)
 ;p+(4.5,-5.5)
 ;p+(5.5,5.5)*\dir{}="b",**\dir{-},"b"
 ;p+(1,-1)*\dir{}="b",**\dir{-},"b"
 ;p+(-11,1)
 ;p+(0,5.5)
 ;p+(3,0)*\dir{}="b",**\dir{-},"b"
 ;p+(0,2)
 ;p+(5,0)*\dir{}="b",**\dir{-},"b"
 ;p+(-5,0)
 ;p+(4.5,-4.5)*\dir{}="b",**\dir{-},"b"
 ;p+(-4.5,4.5)
 ;p+(0,-2)
 ;p+(2.5,-2.5)*\dir{}="b",**\dir{-},"b"
 ;p+(-2.5,4.5)
 ;p+(-4.5,-4.5)
 ;p+(2.5,2.5)*\dir{}="b",**\dir{-},"b"
 ;p+(2.5,-2.5)*\dir{}="b",**\dir{-},"b"
 ;p+(-.5,4.5)
 ;p+(-2.5,-4.5)
 ;p+(4.5,4.5)*\dir{}="b",**\dir{-},"b"
 ;p+(4.5,-4.5)*\dir{}="b",**\dir{-},"b"
 ;p+(-6.5,4.5)
 ;p+(-.5,-4.5)
 ;p+(4.5,4.5)*\dir{}="b",**\dir{-},"b"
 ;p+(4.5,-4.5)*\dir{}="b",**\dir{-},"b"
 ;p+(-8.5,4.5)
 ;p+(1.5,-4.5)
 ;p+(4.,4.)*\dir{}="b",**\dir{-},"b"
 ;p+(-5.5,.5)
\endxy
\]
Here, $\cal{P}_B$ is the unique postprojective component of $\Gamma(\mod B)$, it coincides with the postprojective component $\cal{P}_C$ of $\Gamma(\mod C)$ where $C$ is wild concealed and such that $B$ is a branch extension of $C$.
 The family of components $\cal{R}_B$ consists in components of the form $\Ent\mathbb{A}_{\infty}$ or of such components with one or several rays inserted. The connecting component $\cal{C}_B$ contains no projective, but may contain injectives. It also contains a complete slice (which is thus left stable). Finally, $\cal{X}_B$ represents the remaining part of the module category.

 Clearly,  the tilted algebras of type $(b)$ are dual to the ones of type $(a)$.

 We refer to \cite{K1}, \cite{K2}, \cite{K3}, \cite{KS}, \cite{Ri2} for more details on representation theory of tilted algebras of wild type.

 \subsection{Reflections}

 Let $B$ be a triangular algebra and $i$ a sink in its quiver $Q_B$. We define $T^+_i B$ to be the one-point extension of $B$ by the indecomposable injective $B$-module $I(i)$:
 $$T^+_i B = B[I(i)] = \left(  \begin{array}{cc}
                            B & 0 \\
                            I(i) & k \\
                    \end{array} \right)$$ where addition is the usual addition of matrices, while multiplication is induced from the right module structure of $I(i)$. Thus $Q_{T^+_iB}$ contains $Q_B$ as a full convex subquiver and has exactly one additional point which is a source, denoted as $i'$. The \emph{reflection} $S_i^+B$ of $B$ at $i$ is the full subcategory of $T^+_iB$ given by the objects of $B$ except the sink $i$. Thus, the sink $i$ in $Q_B$ is replaced by the source $i'$ in the quiver $\sigma^{+}_{i} Q_B = Q_{S^+_i B}$.
 We may iterate this procedure: a \emph{reflection sequence of sinks} is a sequence $i_1,\dotsc,i_t$ of points in $Q_B$ such that, for each $s\in\set{1,\dotsc,t}$, the point $i_s$
is a sink in the quiver $\sigma_{i_{s-1}}^+\cdots \sigma_{i_1}^+Q_B = Q_{S^+_{i_{s-1}}\cdots S^+_{i_1}B}$. Then, the reflection $B' = S^+_{i_s}(S^+_{i_{s-1}}\cdots S^+_{i_1}B)$ is defined.

\section{Wild quasiserial algebras}

\subsection{The definition}

Let $Q$ be a connected wild quiver, $T^-$ a tilting $\kk Q$-module without postprojective direct summand and $B^- = \End T^-$. Because of Proposition 2.4, $B^-$ is a branch coextension of some wild concealed algebra $C$.

According to \cite{EKS}, Theorem 3.5 (see, also, the algorithmic procedure in \cite{ANS}(2.5)) there exists a reflection sequence of sinks $i_1,\dotsc,i_r$ with all $i_k$ belonging to the branches (not to $C$) and having the property that
$B^+ = S^+_{i_r}\cdots S^+_{i_1}B^-$ is the endomorphism algebra of a tilting $\kk Q$-module without preinjective  direct summand. Thus, again, because of Proposition 2.4, $B^+$ is a branch extension of $C$.

\begin{definition}
Let $B^-$ and $i_1,\dotsc,i_r$ be as above. Then the iterated one-point extension $B = T^+_{i_r}\cdots T^+_{i_1}B^-$ is a \emph{wild quasiserial} algebra.
\end{definition}

\begin{remark}

Using the description of the Auslander-Reiten quivers of $B^-$ and $B^+$ as given in 2.4, it is not hard to see that the Auslander-Reiten quiver of $B$ has the following shape
%
\[
\xy
0;/r.5pc/:
,
(-10,-5)="2a" ;
(-8,-11)="2b" ;
(10,-5)="4a" ;
(8,-11)="4b" ;
(0,0)
,{\ellipse(6.5,6.5){-}},*{\cR_B}
;p+(-23,4)
;p+(3,0)*\dir{}="b",**\dir{-},"b"
;p+(1,1)*\dir{}="b",**\dir{-},"b"
;p+(10,0)*\dir{}="b",**\dir{-},"b"
;p+(-3,0)
;p+(-2,-4)*\dir{}="b",**\dir{-},"b"
;p+(1,0)*\dir{}="b",**\dir{-},"b"
;p+(-3,-6)*\dir{}="b",**\dir{-},"b"
;p+(7,0)
;p+(-4,3)*{\Sigma^-};p+(4,-3)
;p+(-8,-1)*{\Clu_{B^-}};p+(8,1)
;p+(-10,0)*\dir{}="b",**\dir{-},"b"
;p+(-1,1)*\dir{}="b",**\dir{-},"b"
;p+(-3,0)*\dir{}="b",**\dir{-},"b"
 ;p+(0,3)
 ;p+(4.5,0)*\dir{}="b",**\dir{-},"b"
 ;p+(.5,1)*\dir{}="b",**\dir{-},"b"
 ;p+(-1,2)*\dir{}="b",**\dir{-},"b"
 ;p+(-4,0)*\dir{}="b",**\dir{-},"b"
 ;p+(0,-6)
;p+(-13,4)
 ,{\ellipse(10,8):a(210),,=:a(120){-}}
 ;p+(5,0)*{\cX_{B^-}};p+(-5,0)
;p+(13,-4)
;p+(46,8)
;p+(-3,0)*\dir{}="b",**\dir{-},"b"
;p+(-1,1)*\dir{}="b",**\dir{-},"b"
;p+(-10,0)*\dir{}="b",**\dir{-},"b"
;p+(7,0)
;p+(-1.5,-3)*\dir{}="b",**\dir{-},"b"
;p+(1,0)*\dir{}="b",**\dir{-},"b"
;p+(-3.5,-7)*\dir{}="b",**\dir{-},"b"
;p+(-3,0)
;p+(6,3)*{\Sigma^+};p+(-6,-3)
;p+(8,-1)*{\Clu_{B^+}};p+(-8,1)
;p+(10,0)*\dir{}="b",**\dir{-},"b"
;p+(1,1)*\dir{}="b",**\dir{-},"b"
;p+(3,0)*\dir{}="b",**\dir{-},"b"
 ;p+(0,3)
 ;p+(-4.5,0)*\dir{}="b",**\dir{-},"b"
 ;p+(-.5,1)*\dir{}="b",**\dir{-},"b"
 ;p+(1,2)*\dir{}="b",**\dir{-},"b"
 ;p+(4,0)*\dir{}="b",**\dir{-},"b"
 ;p+(0,-6)
;p+(13,4)
 ,{\ellipse(10,8):a(210),,=:a(120){-}}
 ;p+(-5,0)*{\cX_{B^+}};p+(5,0)
;p+(-13,-4)
\endxy
\]
Here $\cal{X}_{B^-}$ consists of all Auslander-Reiten components of $\Gamma(\mod B^-)$ which precede the connecting component $\cal{C}_{B^-}$ of $\Gamma(\mod B^-)$. The family $\cal{R}_B$ consists of components whose stable parts are of the form $\Ent \mathbb{A}_{\infty}$ and in which all projectives, and all injectives, are projective-injective. Similarly, $\cal{C}_{B^+}$ is the connecting component of $\Gamma(\mod B^+)$ and $\cal{X}_{B^+}$ consists of all Auslander-Reiten components which follow $\cal{C}_{B^+}$.

Observe that $\cal{C}_{B^-}$ contains a complete slice $\Sigma^-$ of $\mod B^-$. Clearly, $\Sigma^-$  is not a complete slice in $\mod B$ in general, because $\cal{R}_B$ may contain projective-injectives. However, $\Sigma^-$ is a right section in the sense of \cite{A1}. Dually, a  complete slice $\Sigma^+$ of $\mod B^+$ lies in $\cal{C}_{B^+}$, it is not in general a complete slice in $\mod B$, but it is a left section. Moreover, one has $B^- = B /\Ann \Sigma^-$ and $B^+ = B/ \Ann \Sigma^+$. Also notice that it follows from the description above that $B^-$ is the left support algebra of $B$, and $B^+$ its right support algebra, in the sense of \cite{ACT}. For a detailed example, we refer the reader to section 7.
\end{remark}
\subsection{The representation dimension}
Our first objective is to prove that wild quasiserial algebras have representation dimension $3$. Let $B$ be a wild quasiserial algebra. Using the notation introduced in 3.1, we let $\Sigma^-$ be a complete slice in $\mod B^-$ and
$\Sigma^+$ a complete slice in $\mod B^+$. We may, without loss of generality, choose $\Sigma^+$ in such a way that every indecomposable $B$-module with support lying completely in the extension branches of $B^+$ is a successor of $\Sigma^+$. This is possible, because there are only finitely many isoclasses of such indecomposable modules. As a consequence, the restriction to $B^-$ of any predecessor of $\Sigma^+$ in the component $\cal{C}_B^+$ is nonzero. We let $T^-$ and $T^+$ denote the respective slice modules of $\Sigma^-$ and $\Sigma^+$ and set
$$M = B \oplus \D B \oplus T^{-} \oplus T^+ \oplus \D B^{-}.$$
We prove below that $M$ is an Auslander generator of $\mod B$. Also useful is the module
$$N = B^- \oplus T^{-} \oplus \D B^{-}.$$
Indeed recall that, because of Lemma 2.3, every indecomposable $B^-$-module admits a minimal $\add N$-approximating sequence. Our main technical tool is the following proposition which has almost the same statement as \cite{AST}, Proposition 3.3.

\begin{proposition}
    Let $B$ be a wild quasiserial algebra and $X$  an indecomposable $B$-module
    whose restriction $Y$ to $B^{-}$ is nonzero.
    Let also
        \[\xymatrix@R=12pt{
           0 \ar[r]& L \ar[r]^-r    & N_0  \ar[r]^-q & Y \ar[r]& 0\\
           0 \ar[r]& L' \ar[r]^{i'} & P \ar[r]^{p'}& X/Y \ar[r]&0 }\]
    be respectively a minimal $\add N$-approximating sequence for $Y$ and a
    projective cover of $X/Y$ in $\mod B$.
    Then there exists a $B$-module $K$ such that we have exact sequences
      \[\xymatrix@R=12pt{
         0 \ar[r]& K \ar[r]^-s & N_0 \oplus P    \ar[r]^-t & X \ar[r]& 0\\
         0 \ar[r]& L \ar[r]^\ell & K \ar[r]^{\ell'}& L' \ar[r]&0. }\]
   Moreover, $K\cong L\oplus L'$.
   In particular, $K\in\add N$.
\end{proposition}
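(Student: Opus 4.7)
The plan is to construct $K$ as the kernel of a map $N_0\oplus P\to X$ combining $q$ with a lift of $p'$, then to split the resulting exact sequence by showing $L'\in\add N$.

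Since $P$ is projective, $p':P\to X/Y$ lifts along the surjection $X\twoheadrightarrow X/Y$ to some $\tilde p:P\to X$. Writing $\iota:Y\hookrightarrow X$ for the inclusion, set $t:=(\iota q,\tilde p):N_0\oplus P\to X$. The map $t$ is surjective because $\iota q(N_0)=Y$ and $\tilde p$ surjects onto $X/Y$ modulo $Y$, so setting $K:=\ker t$ produces the first short exact sequence. A pair $(n,p)\in N_0\oplus P$ lies in $K$ if and only if $\tilde p(p)\in Y$, equivalently $p\in L'$, and $q(n)=-\tilde p(p)$; consequently $(n,p)\mapsto p$ defines a surjection $K\to L'$ whose kernel is naturally $\ker q=L$, giving the exact sequence $0\to L\to K\to L'\to 0$.

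The heart of the proof is to show that this sequence splits. I claim $L'\in\add N$: granted this, the $\add N$-approximation property of $q$ in $\mod B^-$ provides a lift $\sigma:L'\to N_0$ of $\phi:=\tilde p|_{L'}:L'\to Y$ along $q$, and the assignment $(l,l')\mapsto(r(l)-\sigma(l'),i'(l'))$ identifies $L\oplus L'$ with $K$. To see $L'\in\add N$, observe first that because each $i_k'$ is a source of $Q_B$ with no incoming arrows, the full subquiver on $\{i_1',\dotsc,i_r'\}$ has no arrows; hence $X/Y$, which is supported on these vertices, is semisimple, equal to $\bigoplus S(i_k')^{n_k}$ for some multiplicities $n_k$. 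Its projective cover is $P=\bigoplus P(i_k')^{n_k}$, and $L'=\rad P=\bigoplus(\rad P(i_k'))^{n_k}$. By the very definition of the one-point extension $T^+_{i_k}$ in $B_{k-1}=T^+_{i_{k-1}}\cdots T^+_{i_1}B^-$, we have $\rad P(i_k')\cong I_{B_{k-1}}(i_k)$, and using that the reflection sequence of sinks $i_1,\dotsc,i_r$ selects its sinks in the coextension branches of $B^-$ in a controlled order (Theorem 3.5 of \cite{EKS}), one checks that each such injective is already a $B^-$-module, namely isomorphic to $I_{B^-}(i_k)$. Thus $L'$ is a direct sum of indecomposable injective $B^-$-modules, lies in $\add\D B^-\subseteq\add N$, and the splitting follows.

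The most technical step, and the main obstacle I anticipate, is the verification that each $\rad P(i_k')=I_{B_{k-1}}(i_k)$ is already a $B^-$-module isomorphic to $I_{B^-}(i_k)$; this amounts to tracking how the successive one-point extensions interact with the coextension branches of $B^-$ and depends crucially on the ordering of the reflection sequence of sinks.
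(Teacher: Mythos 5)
Your construction of $K$ and of the two exact sequences is correct, and your reduction of the splitting to the single claim $L'\in\add N$ (so that $\tilde p|_{L'}\colon L'\to Y$ lifts through the approximation $q$) is exactly the right skeleton; this is also how the proof invoked by the paper (\cite{AST}, Proposition 3.3, applied verbatim) proceeds. The gap lies in your proof of that claim. It is not true that the full subquiver of $Q_B$ on the new vertices $i_1',\dots,i_r'$ has no arrows, nor that every $i_k'$ is a source of $Q_B$: the vertex $i_k'$ is a source only of the intermediate algebra in which it is created, and a later extension $T^+_{i_t}$ creates an arrow $i_t'\to i_k'$ whenever $i_k'$ lies in the top of the injective module by which one extends. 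The paper's own Example 7.1 exhibits this: the reflection sequence is $1,2,10,9$ and the resulting wild quasiserial algebra has arrows $\delta'\colon 2'\to 1'$ and $\varepsilon'\colon 9'\to 10'$ between new vertices. Consequently $X/Y$ need not be semisimple and $P$ need not be the projective cover you describe; worse, $\rad P_B(i_k')$ is in general \emph{not} a $B^-$-module (in Example 7.1, $\rad P_B(2')$ is the injective at $2$ over $T^+_1B^-$ and has the vertex $1'$ in its support), so the identification $\rad P(i_k')\cong I_{B^-}(i_k)$ --- precisely the step you flag as the main obstacle --- fails as stated.

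What saves the statement is the hypothesis that $X$ is indecomposable with $Y\neq 0$, which your argument never uses. It constrains the support of $X$ on the primed vertices to be closed downwards along the reflected branch: in Example 7.1, if $Xe_{2'}\neq 0$ but $Xe_{1'}=0$, then $Xe_{2'}$ is a submodule annihilated by $\rad B$ and meeting $\rad X$ trivially, hence a direct summand of $X$, forcing $X\cong S(2')$ and $Y=0$. Using this kind of analysis of $X/Y$ as a module over the reflected branch, one shows that the kernel $L'=\ker p'$ attached to each summand $P(i_k')$ of $P$ is not all of $\rad P(i_k')$ but only its restriction to $B^-$, which is $I_{B^-}(i_k)$; only then does one get $L'\in\add\D B^-\subseteq\add N$ and the lifting you describe. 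So the overall route is the intended one, but the justification of $L'\in\add N$ must be replaced by an argument that exploits the indecomposability of $X$ and the branch structure rather than the false semisimplicity of $X/Y$.
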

\begin{proof}
The proof of \cite{AST}, Proposition 3.3, applies here verbatim.
\end{proof}

\subsection{The theorem}
\begin{thm}
    Let $B$ be a wild quasiserial algebra. Then $\repdim B = 3$.
\end{thm}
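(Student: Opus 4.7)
The plan is to show that the module
$$M = B \oplus \D B \oplus T^{-} \oplus T^{+} \oplus \D B^{-}$$
introduced in Subsection 3.2 is an Auslander generator of $\mod B$, so that $\repdim B \le \gldim \End M \le 3$ by Lemma 2.2(b). Combined with the lower bound $\repdim B \ge 3$, which holds because $B$ contains the wild concealed algebra $C$ as a convex subcategory and is therefore representation-infinite (Auslander's classical result asserting $\repdim A \le 2$ iff $A$ is representation-finite), this yields the desired equality.

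By Lemma 2.2(b), it suffices to exhibit an $\add M$-approximating sequence for every indecomposable $B$-module $X$. I would split into two cases according to whether the restriction $Y = X|_{B^{-}}$ is nonzero or zero. When $Y \ne 0$, the plan is to feed into Proposition 3.3 a minimal $\add N$-approximating sequence for $Y$, which exists because $N = B^{-} \oplus T^{-} \oplus \D B^{-}$ is an Auslander generator of the tilted algebra $B^{-}$ by Lemma 2.3 (and hence, by Lemma 2.2(b) read in the converse direction, every indecomposable $B^{-}$-module admits such a sequence), together with a projective cover of $X/Y$ in $\mod B$. Proposition 3.3 then produces a short exact sequence $0 \to K \to N_0 \oplus P \to X \to 0$ with $K \in \add N \subseteq \add M$ and $N_0 \oplus P \in \add(N \oplus B) \subseteq \add M$. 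It remains only to verify the approximation property itself, which I would check on indecomposable summands $L$ of $M$ by splitting any morphism $L \to X$ into its restriction $L|_{B^{-}} \to Y$ (factored through the $\add N$-approximation $N_0 \to Y$) and its induced morphism $L \to X/Y$ (factored through the projective cover $P \to X/Y$), and then gluing the two factorisations to obtain the desired map $L \to N_0 \oplus P$.

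When $Y = 0$, the module $X$ has support only on the extension branches of $B^{+}$, so is naturally a $B^{+}$-module. The choice of $\Sigma^{+}$ made in Subsection 3.2 ensures that $X$ is a successor of $\Sigma^{+}$ in $\mod B^{+}$, hence is generated by $T^{+}$. I would then apply Lemma 2.3(a) to the tilted algebra $B^{+}$ and obtain a minimal $\add(T^{+}\oplus \D B^{+})$-approximating sequence $0 \to T_1 \to T_0 \oplus I_0 \to X \to 0$ in $\mod B^{+}$, with $T_0, T_1 \in \add T^{+}$ and $I_0$ injective over $B^{+}$, and then promote it to an $\add M$-approximating sequence in $\mod B$. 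I expect this promotion to be the main technical obstacle. It divides into two subtasks: first, that $I_0 \in \add M$, which is done using the Auslander-Reiten quiver description in Remark 3.2 to identify the indecomposable $B^{+}$-injectives at vertices of the extension branches with summands of $\D B$, and those at vertices of $B^{-}$ with summands of $\D B^{-}$; and second, that any morphism from a summand $L$ of $M$ to $X$ factors through $T_0 \oplus I_0$ in $\mod B$. The latter follows because summands of $M$ whose support avoids the extension branches of $B^{+}$ admit only the zero map to $X$ (by the standard support argument with idempotents $e_v f = f e_v$), while the remaining summands are themselves $B^{+}$-modules, so the factorisation is provided by the approximation property already obtained in $\mod B^{+}$.

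Once both cases are settled, Lemma 2.2(b) yields $\gldim \End M \le 3$, and the theorem follows.
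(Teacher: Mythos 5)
Your overall strategy (exhibit an $\add M$-approximating sequence for every indecomposable $X$ and invoke Lemma 2.2) is the paper's, and your treatment of the modules with zero restriction to $B^-$ matches the paper's last case. However, your case division by whether $Y = X|_{B^-}$ vanishes is genuinely wrong, and this creates a gap. The class of modules with $Y \neq 0$ contains not only the predecessors of $\Sigma^+$ (where the sequence $0 \to K \to N_0 \oplus P \to X \to 0$ of the Proposition does give an $\add M$-approximation) but also a large class of \emph{successors} of $\Sigma^+$: essentially all of $\cal{C}_{B^+}$ beyond $\Sigma^+$ and all of $\cal{X}_{B^+}$ restrict nontrivially to the wild concealed subcategory $C \subseteq B^-$. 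For such an $X$ your verification fails at the step where you factor the induced morphism $L \to X/Y$ through the projective cover $P \to X/Y$: this lift exists automatically only when $L$ is projective. When $L$ is a non-projective summand of $T^+$ or of $\D B$, a nonzero morphism $L \to X$ need neither have image in $Y$ nor factor through a projective, so it need not factor through $N_0 \oplus P$ at all. The paper avoids this by stratifying $\ind B$ according to position relative to the two slices: the Proposition is applied only to modules lying strictly between $\Sigma^-$ and $\Sigma^+$ (the family $\cal{R}_B$ and the predecessors of $\Sigma^+$ in $\cal{C}_{B^+}$), where one can argue that any nonzero $f : M' \to X$ forces $M' \notin \add(T^+ \oplus \D B^+)$, reducing to the two tractable subcases ($M'$ projective, or $M' \in \add(T^- \oplus \D B^-)$ with $f(M') \subseteq Y$). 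All successors of $\Sigma^+$, whether or not $Y = 0$, are then handled uniformly by the $\add(T^+ \oplus \D B^+)$-approximating sequence coming from tilting theory over $B^+$, using that morphisms from the remaining summands of $M$ must factor through $\Sigma^+$.

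A second, smaller imprecision: even on the range where the Proposition applies, your "splitting and gluing" of $L \to X$ into $L|_{B^-} \to Y$ and $L \to X/Y$ is not quite the right mechanism for $L \in \add(T^- \oplus \D B^-)$; there one shows directly that $f(L) \subseteq Y$ (so the $X/Y$-component vanishes) and only then invokes the $\add N$-approximation of $Y$. The paper also disposes of the predecessors of $\Sigma^-$ separately via $\projd X \le 1$ and of the modules of $\cal{C}_{B^-}$ generated by $T^-$ via the $B^-$-approximating sequence, rather than funnelling them through the Proposition; your route through the Proposition happens to work there, but the successors of $\Sigma^+$ with $Y \neq 0$ remain unaccounted for and must be treated as in the paper.
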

\begin{proof}

   Let $X$ be an indecomposable $B$-module. Using again the notation introduced in 3.1, we assume first that $X$ is cogenerated by $\tau T^-$ (that is, $X$ is a proper predeccessor of $\Sigma^-$). In this case, we have $\projd X \leq 1$ and there is nothing to prove. Otherwise, assume $X$ to be a module of $\cal{C}_{B^-}$ which is generated by $T^-$ (that is, which is a successor of $\Sigma^-$). Then because of Lemma 2.2, there exists a minimal $\add(B^-\oplus T^-\oplus \D B^-)$-approximating sequence
    \[\xymatrix@C=40pt@R=20pt{ 0 \ar[r]& M_1 \ar[r]& M_0\ar[r] & X \ar[r]& 0. }\]
   Because  $B^-$ is closed under succesors, the projective $B^-$-modules are also projective $B$-modules, so $M_1$, $M_0\in\add M$.
   Let $f:M'\to X$ be a nonzero morphism with  $M' \in \add M.$ We may assume, without loss of generality, that $M'$ is an indecomposable summand of $M$. Because $X$ is a successor of $\Sigma^-$, we have $M'\in\add(B^-\oplus T^-\oplus \D B^-)$ hence $f$ factors through $M_0$, because of Lemma 2.2. Thus, we have an $\add M$-approximating sequence.

   Now, let $X$ belong to one of the components which are successors of $\cal{C}_{B^-}$  and be such that the restriction $Y$ of $X$ to $B^-$ is nonzero. In particular, this is the case for all modules in the family of components $\cal{R}_B$ and for the predecessors of $\Sigma^+$ in $\cal{C}_{B^+}$ (this is due to our choice of the slice $\Sigma^+$). Because of Proposition 3.2, there exists an exact sequence

           \[\xymatrix@1@R=12pt{
              0 \ar[r]& K \ar[r]^-s    & T_0\oplus I_0\oplus P  \ar[r]^-t & X \ar[r]& 0  }\]
    with $K$, $T_0\oplus I_0\oplus P\in\add(B^-\oplus T^- \oplus \D B^-)$.
    We show that it is an $\add M$-approximating sequence.
    Let $f:M'\to X$ be a nonzero morphism with $M'\in \add M$. Because $f$ is nonzero, we have $M'\notin\add(T^+\oplus \D B^+)$. Therefore, $M'\in\add(B \oplus T^- \oplus \D B^-)$.
    If $M'$ is projective then $f$ trivially lifts to a morphism $M'\to T_0\oplus I_0\oplus P$.
    If $M'\in\add(T^-\oplus\D B^-)$, then $f(M') \subset Y = \Hom_B(B^-,X)$. Applying Proposition 3.2, we see that $f$ lifts to a morphism $M'\to T_0\oplus I_0$ and consequently to a morphism $M'\to T_0\oplus I_0\oplus P$.

    Because indecomposable $B$-modules whose restriction to $B^-$ is zero have support lying completely in the extension branches of $B^+$, they are successors of $\Sigma^+$, due to the construction of the latter.
    Therefore there only remains to consider the case where $X$ is a successor of $\Sigma^+$.
    If this is the case, then, because of Lemma 2.2, there exists a minimal $\add(T^+\oplus \D B^+)$-approximating sequence
           \[\xymatrix@1@R=12pt{
              0 \ar[r]& T_1' \ar[r]   & T_1\oplus I_1 \ar[r] & X \ar[r]& 0 . }\]
    Because $B^+$-injectives are also $B$-injectives, we have $T_1'$, $T_1\oplus I_1 \in\add M$.
    Let $f:M'\to X$ be a nonzero morphism with $M'\in\add M$ indecomposable.
    If $M'\in\add(T^+\oplus \D B^+)$, then clearly $f$ lifts to a morphism $M'\to T_1\oplus I_1$.
    If $M'\notin \add(T^+\oplus \D B^+)$, then $f$ must factor through $\Sigma^+$ and thus also lifts to a
    morphism $M'\to T_1\oplus I_1$.
    This finishes the proof.
\end{proof}

\section{Tilted Gluings}
\subsection{The definition}

In \cite{AST} the notion of finite gluing of algebras was introduced. Given two algebras of representation dimension three, the gluing process allows to construct a larger algebra having the same representation dimension. We use here the same strategy, but have to modify the definitions of \cite{AST} to make them suit our needs.

\begin{definition} An algebra $B$ is called  \emph{right (or left) admissible} if its Auslander-Reiten quiver admits a separating, acyclic and generalised standard component $\cal{C}_{B^+}$ containing a left section $\Sigma^+$ (or $\cal{C}_{B^-}$ containing a right section $\Sigma^-$, respectively).
\end{definition}

Admissibility is a condition made up for tilted algebras. Indeed, let $B$ be the endomorphism algebra of a tilting module over a hereditary algebra without postprojective direct summands. Then the connecting component of $\Gamma(\mod B)$ is separating, acyclic, generalised standard and moreover, it contains a right section. Therefore $B$ is left admissible. Dually, if $B$ is the endomorphism algebra of a tilting module without preinjective direct summands, then $B$ is right admissible. Conversely, if $B$ is an algebra having a generalised standard component containing a left, or right, section $\Sigma$, then because of \cite{A1}(3.6), $B/\Ann \Sigma$ is a tilted algebra having $\Sigma$ as complete slice.

Notice also that any algebra having a preinjective (or postprojective) component is right (or left, respectively) admissible.

Before defining gluings, we recall a notation. Let $\Sigma$ be a right, or a left, section in an acyclic component of the Auslander-Reiten quiver of an algebra $A$. Then we denote by $\ivec{\Sigma}$ (or $\vec{\Sigma}$) the set of all indecomposable $A$-modules which are predecessors (or successors, respectively) of $\Sigma$ in $\ind A$.

\begin{definition}
    Let $B_1$  be a right admissible and $B_2$ a left admissible algebra. Then an algebra $A$ is said to be a \emph{tilted gluing} of $B_1$ and $B_2$, in symbols $A = B_1*B_2$, if
    \begin{itemize}
        \item [\indent(FG1)] $\Gamma(\mod A)$ has a separating component $\cat G$ such that:
            \begin{enumerate}[\indent(1)]
                \item $\cal G$ contains a left section isomorphic to $\Sigma^+_{B_1}$
                    and the indecomposable $A$-modules in $\cat G$ which precede it are exactly those
                    of $\ivec{\Sigma}^+_{B_1}\cap \cat C^+_{B_1}$,
                \item $\cal G$ contains a right section isomorphic to $\Sigma^-_{B_2}$
                    and the indecomposable $A$-modules in $\cat G$ which succede it are exactly those
                    of $\vec{\Sigma}^{-}_{B_2} \cap \cat C^-_{B_2}$,
                \item $(\ivec{\Sigma}^+_{B_1} \cap \cat C^+_{B_1}) \cup
                        (\vec{\Sigma}^-_{B_2} \cap \cat C^-_{B_2})$ is cofinite in $\cat G$;
            \end{enumerate}
        \item [\indent(FG2)] The remaining indecomposable $A$-modules belong to one of two classes:
            \begin{enumerate}[\indent(1)]
                \item those which precede $\cal G$ are the indecomposable $B_1$-modules
                        in $\ivec{\Sigma}^+_{B_1} \setminus \cat C^+_{B_1}$,
                \item those which succede $\cal G$ are the indecomposable $B_2$-modules
                        in $\vec{\Sigma}^-_{B_2} \setminus \cat C^-_{B_2}$.
            \end{enumerate}
    \end{itemize}
\end{definition}

Thus the separating component $\cat G$, which we call the \emph{glued component}, induces a decomposition

   \[\ind A = (\ivec{\Sigma}^+_{B_1} \setminus \cal{C}^+_{B_1})
                   \,\vee\; \cat G \;
                   \vee\,  (\vec{\Sigma}^-_{B_2} \setminus \cal{C}^-_{B_2}).\]

\[\xymatrix@C=10pt@R=10pt{
   \ar@{-}@(r,r)[dddd]
    &&                  &&                  &&      &&          &&  \ar@{-}@(l,l)[dddd]\\
      &\ar@{.}[r]
       &\ar@{-}[r]
                     &&
                                        &&  &\ar@{-}[r]&\ar@{.}[r]
                                                   &&  \\
   {\ivec{\Sigma}^+_{B_1} \setminus \cal{C}^+_{B_1}}
      &&    {\ivec{\Sigma}^+_{B_1} \cap \cal{C}^+_{B_1}}
                      && {\Sigma^+_{B_1}} &&    {\Sigma^-_{B_1}}
                                          && {\vec{\Sigma}^-_{B_2}\cap \cal{C}^-_{B_2}}
                                                   &&  {\vec{\Sigma}^{-}_{B_2} \setminus \cal{C}^-_{B_2}}   \\
      &\ar@{.}[r]& \ar@{-}[r] && && &\ar@{-}[r]&\ar@{.}[r]&&  \\
      && && &{\cat G} & && &&
      \save "2,4"+<-3pt,8pt>."4,8"*[F-:<10pt>]\frm{}\restore
   }\]

In particular, ones sees easily that the finite gluings of \cite{AST} are tilted gluings in our sense. For more examples, we refer the reader to section 7.

\subsection{Representation dimension}

\begin{proposition}
    Let $B_1$ and $B_2$ be respectively a right and a left admissible algebra having representation dimension three and $A = B_1*B_2$. If the slice module $\Sigma^-_{B_2}$ is a direct summand of an Auslander generator for $\mod B_2$, then
     $\repdim A = 3$.
\end{proposition}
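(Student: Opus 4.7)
The plan is to exhibit an explicit candidate Auslander generator $M$ and verify, via Lemma 2.1(b), that every indecomposable $A$-module admits an $\add M$-approximating sequence, thereby showing $\repdim A \leq 3$. Combined with the fact that $A$ is representation-infinite (the glued component $\cat G$ carries both a left and a right section and so contains infinitely many indecomposables), this will give $\repdim A = 3$. Let $M_1$ be an Auslander generator of $\mod B_1$ and let $M_2$ be an Auslander generator of $\mod B_2$ having the slice module $T^-_{B_2}$ of $\Sigma^-_{B_2}$ as a direct summand (such an $M_2$ exists by hypothesis). Viewing $B_1$- and $B_2$-modules as $A$-modules through the canonical quotient maps $A\to B_1$ and $A\to B_2$, set
\[ M = A \oplus \D A \oplus M_1 \oplus M_2. \]

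I would then verify that $M$ is an Auslander generator using the decomposition
\[ \ind A = (\ivec{\Sigma}^+_{B_1} \setminus \cal{C}^+_{B_1}) \vee \cat G \vee (\vec{\Sigma}^-_{B_2} \setminus \cal{C}^-_{B_2}) \]
supplied by (FG1)--(FG2). For $X \in \ivec{\Sigma}^+_{B_1} \setminus \cal{C}^+_{B_1}$, condition (FG2)(1) identifies $X$ with an indecomposable $B_1$-module, so $X$ admits an $\add M_1$-approximating sequence in $\mod B_1$. Because $\cat G$ is separating and acyclic and $X$ strictly precedes $\cat G$, any nonzero morphism from an indecomposable summand of $M_2$ into $X$ would have to factor through $\cat G$, which is impossible; hence the $B_1$-sequence remains $\add M$-approximating in $\mod A$. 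The symmetric argument handles $X \in \vec{\Sigma}^-_{B_2} \setminus \cal{C}^-_{B_2}$ using $M_2$.

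The main obstacle is the case $X \in \cat G$, whose support may involve both $B_1$ and $B_2$. Here I would split by position within $\cat G$: by (FG1)(1) the modules in $\ivec{\Sigma}^+_{B_1} \cap \cal{C}^+_{B_1}$ are $B_1$-modules, so one lifts their $B_1$-approximating sequences; symmetrically (FG1)(2) treats the successors of $\Sigma^-_{B_2}$ in $\cat G$ by means of $M_2$. Condition (FG1)(3) then leaves only finitely many ``middle'' modules to consider, and for each such $X$ I would adapt the splicing construction of Proposition 3.2 (cf.\ \cite{AST}, Proposition 3.3): taking the restriction $Y$ of $X$ to $B_1$, one combines a minimal $\add M_1$-approximating sequence for $Y$ with a projective cover of $X/Y$ in $\mod A$ to build the desired $\add M$-approximating sequence. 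It is precisely in treating this middle region from the $B_2$-side that the hypothesis $T^-_{B_2} \in \add M_2$ is essential: it guarantees that the dual splicing produces terms in $\add M$, since the slice module of $\Sigma^-_{B_2}$, which by Lemma 2.2 is the natural approximating object for modules generated by it near $\cat G$, is available inside $\add M_2 \subseteq \add M$. This completes the verification and yields $\repdim A = 3$.
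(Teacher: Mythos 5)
Your overall strategy (build $M$ from Auslander generators of $B_1$ and $B_2$ and check approximating sequences along the decomposition $\ind A = (\ivec{\Sigma}^+_{B_1}\setminus\cal{C}^+_{B_1})\vee\cat G\vee(\vec{\Sigma}^-_{B_2}\setminus\cal{C}^-_{B_2})$) is the right one and is essentially that of \cite{AST}(4.2), to which the paper simply defers. But there are two genuine gaps. First, the ``symmetric argument'' for $X\in\vec{\Sigma}^-_{B_2}\setminus\cal{C}^-_{B_2}$ is not symmetric and, as stated, is false. For $X$ preceding $\cat G$, the separation property does give $\Hom_A(M',X)=0$ for summands $M'$ of $M_2$ lying in or after $\cat G$. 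But for $X$ succeeding $\cat G$, a morphism from a summand of $M_1$ (or from a projective or a module of $\cat G$) into $X$ is \emph{not} forced to be zero: it factors through $\cat G$, and more precisely through the right section $\Sigma^-_{B_2}$, hence through $\add T^-_{B_2}$. It is exactly here --- and not in the middle region --- that the hypothesis $T^-_{B_2}\in\add M_2$ is used: the factorization lands in $\add M_2$, so the morphism lifts along the minimal $\add M_2$-approximating sequence of $X$. Your proposal misplaces the role of the hypothesis and leaves the successors of $\Sigma^-_{B_2}$ essentially untreated.

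Second, your treatment of the finitely many ``middle'' modules of $\cat G$ does not work. The splicing of Proposition 3.2 is specific to wild quasiserial algebras: it needs $B^-$ to be a full convex subcategory closed under predecessors so that the restriction $Y$ of $X$ is a submodule, and it needs the kernel $L'$ of the projective cover of $X/Y$ to land in $\add N$; none of this is part of the definition of a tilted gluing, where $B_1$ and $B_2$ are only related to $A$ through the shape of $\Gamma(\mod A)$. The condition (FG1)(3) (cofiniteness) is there precisely so that one can take the cheap way out: adjoin the finitely many indecomposables of $\cat G$ outside $(\ivec{\Sigma}^+_{B_1}\cap\cal{C}^+_{B_1})\cup(\vec{\Sigma}^-_{B_2}\cap\cal{C}^-_{B_2})$ as direct summands of $M$, so that each such $X$ has the trivial approximating sequence $0\to 0\to X\to X\to 0$ (this is also what the paper does in the proof of Theorem 6.1, where the finite part $\cat Y_q$ is wholly included in $M_q$). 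With $M$ enlarged in this way and the factorization-through-the-slice argument supplied for the successors of $\Sigma^-_{B_2}$, your proof goes through.
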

\begin{proof}
The proof of \cite{AST}(4.2) applies verbatim.
\end{proof}

\subsection{Induction}

 We define inductively the tilted gluing of $n$ algebras with $n \geq 2$. Assume $B_1* \cdots* B_n$ is defined and assume moreover that it is right admissible while $B_{n+1}$ is left admissible. Then we say that  $A = (B_1* \cdots * B_n)* B_{n+1} = B_1* \cdots * B_n* B_{n+1}$ is a tilted gluing of the $B_i$.

 As above, we denote, for each $i$, by $\Sigma^+_{B_i}$, $\Sigma^-_{B_{i}}$ respectively the left and the right section which define the gluing
\begin{corollary}
    Let $A = B_1* \cdots * B_n$ be a tilted gluing of algebras where  $\repdim B_i = 3$ for all $i$
    and the slice modules of the $\Sigma^-_{B_{i+1}}$ for $ 1 \leq i < n$  are direct summands
    of an Auslander generator for mod $B_{i+1}$. Then $\repdim A = 3$.
\end{corollary}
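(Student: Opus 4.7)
The plan is to induct on $n \geq 2$, with Proposition 4.2 supplying both the base case and the engine of the induction step; everything else is bookkeeping. For the base case $n = 2$, the statement of the corollary specializes to: $B_1$ right admissible, $B_2$ left admissible, $\repdim B_1 = \repdim B_2 = 3$, and $\Sigma^-_{B_2}$ a direct summand of an Auslander generator for $\mod B_2$. This is literally the hypothesis of Proposition 4.2, which I would invoke to conclude $\repdim(B_1 * B_2) = 3$.

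For the inductive step, I would assume the corollary for tilted gluings of at most $n$ factors and consider $A = B_1 * \cdots * B_{n+1}$. By the inductive definition given just before the statement, writing $A' := B_1 * \cdots * B_n$, one has $A = A' * B_{n+1}$ where $A'$ is right admissible and $B_{n+1}$ is left admissible (both facts being required for the gluing to be defined in the first place). The hypotheses on $A$ restrict to exactly the hypotheses needed to invoke the inductive assumption on $A'$: the algebras $B_1, \dots, B_n$ each have representation dimension three, and the slice modules $\Sigma^-_{B_{i+1}}$ for $1 \leq i < n-1$ are still direct summands of Auslander generators for $\mod B_{i+1}$. Hence by induction $\repdim A' = 3$. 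Now I would apply Proposition 4.2 to the pair $(A', B_{n+1})$: the representation dimensions are both three, and the remaining hypothesis, that $\Sigma^-_{B_{n+1}}$ is a direct summand of an Auslander generator for $\mod B_{n+1}$, is exactly the $i = n$ case of the assumption on $A$. This yields $\repdim A = \repdim(A' * B_{n+1}) = 3$, completing the induction.

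I do not anticipate any real obstacle here, since Proposition 4.2 is designed precisely to support a one-step extension of the gluing, and the iterated definition of $B_1 * \cdots * B_n$ has already absorbed the requisite right admissibility of $A'$ into the very hypothesis that the gluing exists. The only point deserving attention is verifying that the Auslander-generator condition on each $\Sigma^-_{B_{i+1}}$ is a condition purely internal to $\mod B_{i+1}$, so that it passes unchanged from the hypothesis on $A$ down to the hypothesis on $A'$ needed to invoke the inductive assumption; this is immediate from the formulation.
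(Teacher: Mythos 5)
Your proof is correct and is exactly the induction the paper has in mind (its own proof reads simply ``An easy induction''): the base case and the inductive step both reduce to Proposition 4.2, with right admissibility of $B_1*\cdots*B_n$ built into the definition of the iterated gluing. No issues.
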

\begin{proof}
   An easy induction.
\end{proof}

\section{Repetitive categories of tilted algebras}
\subsection{Canonical decomposition}

As seen in the introduction, selfinjective algebras of wild tilted type are orbit algebras of repetitive categories, as introduced in \cite{HW}. For the definition and first properties of repetitive categories, we refer the reader, for instance, to \cite{AST}. We need the following structure result for admissible groups of automorphisms, see \cite{EKS}(3.6).

\begin{proposition}
Let $B$ be a tilted algebra of wild type and $G$ a torsion-free admissible group of automorphisms of $\hat{B}$. Denote by $\nu_{\hat{B}}$ the Nakayama automorphism of $\hat{B}$.  Then $G$ is an infinite cyclic group generated by a strictly positive automorphism of one of the forms
\begin{enumerate}[\indent(a)]
      \item $\sigma\nu^k_{\hat B}$ for a rigid automorphism $\sigma$ and some $k\ge 0$, or
      \item $\mu\varphi^{2k+1}$ for a rigid automorphism $\mu$, a strictly positive automorphism $\varphi$ such that $\varphi^2=\nu_{\hat B}$ and some $k\ge 0$.\qedhere
   \end{enumerate}
\end{proposition}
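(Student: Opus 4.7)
The plan is to reduce the classification of $G$ to analyzing $\Aut(\hat B)$ modulo its subgroup of rigid automorphisms, and then describe the resulting quotient. First I would argue that $G$ must be infinite cyclic. The natural $\nu_{\hat B}$-grading of $\hat B$ into slabs induces a homomorphism $\Aut(\hat B)\to\Ent$ sending each automorphism to the shift it induces on slab indices. Admissibility combined with torsion-freeness forces the restriction of this homomorphism to $G$ to be injective: any nontrivial element of the kernel would preserve every slab and, acting freely on the finitely many objects of a single slab, would have to be of finite order and hence trivial. Thus $G$ embeds into $\Ent$, is infinite cyclic, and its generator $g$ is necessarily strictly positive (otherwise the orbit category $\hat B/G$ would fail to be finite dimensional).

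The core step is to classify $g$ modulo the subgroup $R\subseteq\Aut(\hat B)$ of rigid automorphisms. The induced image of $\Aut(\hat B)/R$ in $\Ent$ is an infinite cyclic subgroup containing the class of $\nu_{\hat B}$, so it is either generated by $[\nu_{\hat B}]$ or admits a ``half-period'' generator $[\varphi]$ realizing a strictly positive $\varphi$ with $\varphi^{2}=\nu_{\hat B}$. In the first case one obtains $g=\sigma\nu_{\hat B}^{k}$ with $\sigma$ rigid, recovering case (a). In the second, one writes $g=\mu\varphi^{j}$; when $j=2k$ is even, absorbing $\varphi^{2k}=\nu_{\hat B}^{k}$ returns us to case (a), while when $j=2k+1$ is odd one obtains case (b). The bound $k\ge 0$ is forced in each case by the strict positivity of $g$ together with the positivity of $\nu_{\hat B}$ and $\varphi$.

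The main obstacle is establishing this dichotomy, i.e.\ proving that no shift smaller than a genuine square root of $\nu_{\hat B}$ can occur. For $B$ of wild tilted type, I would exploit the rigid structure of $\Gamma(\mod\hat B)$: the $\nu_{\hat B}$-translates of the connecting component, together with the families $\cR_B$ and $\cX_B$ described in Section 2, fill $\Gamma(\mod\hat B)$ in a periodic pattern which any automorphism must preserve up to shift. Using the constraints on tilting $\kk Q$-modules in the wild hereditary case (in particular the uniqueness of the postprojective and preinjective components and the scarcity of selfequivalences on the resulting tilting objects), one controls how automorphisms can permute complete slices and connecting components and rules out shifts of period strictly less than half. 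The existence of $\varphi$ then corresponds to a specific involutive symmetry of $B$ that identifies the ``positive'' and ``negative'' halves of $\hat B$ across a half-period, and must be recognized as a genuine additional structural condition rather than something automatic; verifying this precisely is where the wildness hypothesis enters most essentially.
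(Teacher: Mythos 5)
The paper does not prove this proposition at all: it is imported verbatim from \cite{EKS}(3.6), so there is no internal proof to compare yours against, and your attempt has to be measured against what that reference establishes. Your opening reduction (torsion-free plus admissible implies infinite cyclic with strictly positive generator) is essentially sound, but the degree homomorphism should not be defined on ``slabs'' of $\hat B$: a general non-rigid automorphism need not carry a slab into a slab, so your map $\Aut(\hat B)\to\Ent$ is not well defined as stated. The correct invariant is the induced permutation of the canonical decomposition $\Gamma(\mod\hat B)=\bigvee_{q\in\Ent}(\cat X_q\vee\cat R_q)$ of Theorem 5.1: the separation property linearly orders the components $\cat X_q$, so every automorphism shifts $q$ by a constant, and the normalisation $\nu_{\hat B}(\cat X_q)=\cat X_{q+2}$ gives the Nakayama automorphism degree exactly $2$.

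The more serious gaps are in your second and third paragraphs. First, the ``dichotomy'' you isolate as the main obstacle is misplaced: since degrees are integers and $d(\nu_{\hat B})=2$, the only case distinction needed is the parity of $d(g)$ for the chosen generator $g$; there are no ``shifts of period strictly less than half'' to rule out (degree $1$ is already the square-root case and is the smallest positive degree possible), and the subgroup of $\Ent$ generated by the image of all of $\Aut(\hat B)$ is irrelevant to classifying $g$. Second, the genuinely hard point is the one you do not address: when $d(g)=2k+1$ is odd, the natural candidate $\varphi:=g\nu_{\hat B}^{-k}$ has degree one but only satisfies $\varphi^2=\rho\,\nu_{\hat B}$ for some rigid automorphism $\rho$, whereas the statement requires $\varphi^2=\nu_{\hat B}$ on the nose and $\varphi$ strictly positive. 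Normalising $\rho$ away (by replacing $\varphi$ with $\rho'\varphi$ and solving a twisted equation in the finite group of rigid automorphisms) and verifying positivity is precisely the content of \cite{EKS}(3.6). Your closing remark that the existence of $\varphi$ is ``a genuine additional structural condition rather than something automatic'' sits uneasily with the statement you are proving: the proposition asserts that whenever the generator has odd degree such a $\varphi$ does exist, so its construction cannot be deferred.
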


Let $Q$ be a wild quiver and $B$ a tilted algebra of type $Q$. The most immediate properties of the Auslander-Reiten quiver $\Gamma(\mod \hat{B})$  of the repetitive category $\hat{B}$ of $B$ are summarised in the following theorem (see \cite{EKS}(3.4) and (3.5)).

\begin{thm}
   Let $B$ be a tilted algebra of wild type $Q$.
   Then the Auslander-Reiten quiver of  $\hat B$ is of the form
        $$\Gamma(\mod{\hat B}) = \bigvee_{q\in\Ent}(\cal{X}_q \vee \cal{R}_q)$$
  where, for each $q \in \Ent$,
    \begin{enumerate}[\indent(a)]
        \item $\cal{X}_q$ is an acyclic component whose stable part is of the form $\Ent Q$,
        \item $\cal{R}_q$ is a family of components whose stable part of each is of the form $\Ent\mathbb{A}_{\infty}$,
        \item $\nu_{\hat B}(\cal{X}_q) = \cal{X}_{q+2}$ and $\nu_{\hat B}(\cal{R}_q) = \cal{R}_{q+2}$,
        \item $\cal{X}_q$ separates             $\bigvee_{p < q }(\cal{X}_p \vee \cal{R}_p)~~ $
                            from $~~\cal{R}_q \vee \left(\bigvee_{p > q }(\cal{X}_p \vee \cal{R}_p)\right)$.

    \end{enumerate}
\end{thm}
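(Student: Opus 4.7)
The plan is to reduce the structure of $\Gamma(\mod\hat{B})$ to what is already known about $\Gamma(\mod B)$, exploiting the canonical embedding $\mod B \hookrightarrow \mod\hat{B}$ together with the reflections of Section 2.5 and the Nakayama automorphism $\nu_{\hat B}$. I would assume, without loss of generality, that the tilting $\kk Q$-module defining $B$ has no preinjective direct summand, so that by Proposition 2.4 one has $\Gamma(\mod B)=\cal{P}_B\vee\cal{R}_B\vee\cal{C}_B\vee\cal{X}_B$ with $\cal{C}_B$ a connecting component containing a complete slice $\Sigma$.

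The first main step is to construct $\cal{X}_0$, defined as the component of $\Gamma(\mod\hat{B})$ containing the image of $\Sigma$ under the inclusion. Because each indecomposable projective $\hat{B}$-module is also injective, the indecomposable injectives of $B$ sitting in $\cal{C}_B$ acquire new Auslander--Reiten successors in $\mod\hat{B}$ belonging to the next shifted copy of $B$, and dually indecomposable projectives of $B$ acquire new predecessors coming from the previous shift. Iterating this gluing in both directions, and using that $\cal{C}_B$ minus the finitely many projectives and injectives lying in it is a slice translate of $\Ent Q$, one obtains that $\cal{X}_0$ is acyclic with stable part isomorphic to $\Ent Q$, establishing (a) for $q=0$. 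The components of $\cal{R}_B$ contain no projectives or injectives of $B$ and hence embed unchanged into $\mod\hat{B}$; because $B$ is a branch (co)extension of a wild concealed algebra, each such component has stable part $\Ent\mathbb{A}_\infty$, giving (b) for $q=0$ with $\cal{R}_0:=\cal{R}_B$.

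To produce the components for other values of $q$, I would use the reflection procedure of Section 2.5. Starting from $B$ and performing the appropriate reflection sequence of sinks yields a new wild tilted algebra $B^{(1)}$ with $\hat{B^{(1)}}\cong\hat{B}$ as $\kk$-categories; applying the preceding paragraph to $B^{(1)}$ then produces $\cal{X}_1$ and $\cal{R}_1$. Iterating reflections in both directions gives $\cal{X}_q$ and $\cal{R}_q$ for every $q\in\Ent$. Two consecutive reflection cycles amount to one application of $\nu_{\hat B}$, since $\nu_{\hat B}$ implements the full one-period shift of the repetitive category while a single complete reflection sequence implements a half-period; this yields $\nu_{\hat B}(\cal{X}_q)=\cal{X}_{q+2}$ and $\nu_{\hat B}(\cal{R}_q)=\cal{R}_{q+2}$, proving (c).

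The separating property (d) should then be inherited from the classical fact that $\cal{C}_B$ separates $\cal{P}_B\vee\cal{R}_B$ from $\cal{X}_B$ in $\mod B$, transported via the inclusion $\mod B^{(q)}\hookrightarrow\mod\hat{B}$ to each reflected level. In my view the main obstacle is \emph{exhaustion}: showing that every indecomposable $\hat{B}$-module lies in some $\cal{X}_q$ or $\cal{R}_q$. Here one must argue that each such module, being finitely generated, has support concentrated in finitely many shifted copies of $B$ and hence is a module over some reflected tilted algebra $B^{(q)}$; the known description of $\Gamma(\mod B^{(q)})$ then locates it among the listed components. Making this precise requires a careful analysis of how $\hat{B}$-modules decompose along the $\Ent$-indexed chain of repetitive blocks, typically via syzygies and the interplay with $\nu_{\hat B}$, and this is the step where the wildness of $Q$ enters nontrivially through the shape $\Ent\mathbb{A}_\infty$ of the regular components.
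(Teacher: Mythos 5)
First, note that the paper offers no proof of this statement: it is quoted directly from Erdmann--Kerner--Skowro\'nski \cite{EKS}, Theorems 3.4 and 3.5, so there is no internal argument to compare yours against, and your sketch must stand on its own. Its overall strategy (embed the reflected tilted algebras into $\hat B$, glue their connecting components, realise the shift by $2$ via $\nu_{\hat B}$, and handle exhaustion through finiteness of supports) is indeed the strategy of \cite{EKS}. But it contains a concrete error: the components of $\cal{R}_B$ do \emph{not} ``embed unchanged'' into $\mod \hat B$, and they are not free of projectives to begin with. For a branch extension $B$ of a wild concealed algebra $C$, the postprojective component $\cal{P}_B=\cal{P}_C$ carries only the projectives of $C$; the projectives at the branch vertices sit in the ray-inserted components of $\cal{R}_B$ (see 2.4). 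In $\Gamma(\mod\hat B)$ the family $\cal{R}_q$ acquires further projective-injective $\hat B$-modules, and its support algebra is the wild quasiserial algebra $B_q=T^+_{i_r}\cdots T^+_{i_1}B_q^-$, a proper iterated one-point extension of $B_q^-$ (Theorem 5.2(d) and the Remark in 3.1). So part (b) cannot be obtained by transporting $\cal{R}_B$ verbatim; one must analyse how the projective-injectives are inserted into components of type $\Ent\mathbb{A}_{\infty}$.

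Two further steps are asserted rather than proved. (i) The component $\cal{X}_q$ is not the single connecting component $\cal{C}_B$ extended in both directions: it is glued from the successor-closed part of the connecting component of $B_q^-$ and the predecessor-closed part of the connecting component of the \emph{different}, reflected, tilted algebra $B^+_{q-1}$, with a finite middle piece containing the projective-injectives. Your sketch does not explain why this glued component is acyclic, why its stable part is exactly $\Ent Q$ (the connecting component of $B$ alone is only one-sidedly stable), or why it stays disjoint from the regular families. (ii) Exhaustion --- that every indecomposable $\hat B$-module lies in some $\cal{X}_q$ or $\cal{R}_q$ --- rests on the local support-finiteness of $\hat B$ and on the existence of the reflection sequences taking $B_q^-$ to $B_q^+$ and $B_q^+$ to $B_{q+1}^-$; you correctly flag this as the crux, but it is precisely the hard content of \cite{EKS}, Theorem 3.5, not a routine consequence of finite generation. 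As written, the proposal is an outline of the known argument with one wrong claim and the two decisive steps left open.
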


The description of the previous theorem is known as the \emph{canonical decomposition} of $\Gamma(\mod \hat B)$.

\subsection{Structure of the repetitive category}

Let, as before, $B$ be a tilted algebra of wild type $Q$ and $\Gamma(\mod{\hat B}) = \bigvee_{q\in\Ent}(\cal{X}_q \vee \cal{R}_q)$  the canonical decomposition of $\Gamma(\mod{\hat B})$, as in Theorem 5.1 .  For each $q \in \Ent$, we denote by $\cal{X}^{-}_q$ a fixed right stable full translation subquiver of $\cal{X}_q$ which is closed under successors in $\Gamma(\mod{\hat B})$ and by $\cal{X}^{+}_q$ a fixed left stable full translation subquiver of $\cal{X}_q$ which is closed under predecessors in
$\Gamma(\mod{\hat B})$. We may assume without loss of generality that $\cal{X}^{-}_q = (- \Nat)Q$, $\cal{X}^{+}_q = \Nat Q$,
$\nu_{\hat B}(\cal{X}^{-}_q) = \cal{X}^{-}_{q+2}$ and $\nu_{\hat B}(\cal{X}^{+}_q) = \cal{X}^{+}_{q+2}$ for any $q \in \Ent$. We then have the following theorem \cite{EKS}, Theorem 3.5.

\begin{thm}
   Let $B$ be a tilted algebra of wild type $Q$ and let
      \[ \Gamma(\mod \hat B)=\bigvee_{q\in\Ent}(\cat X_q\vee \cat R_q)\]
    be the canonical decomposition of $\Gamma(\mod \hat B)$. Then, for each $q \in \Ent$,  there exist tilted algebras $B^{-}_q$ and $B^{+}_q$ of type $Q$ such that:
    \begin{enumerate}[\indent(a)]
       \item $B^{-}_q$ is a full convex subcategory of $\hat B$. Moreover,  $B^{-}_q = \End_{\kk Q} T^{-}_q)$, where $T^-_q$ is a tilting $\kk Q$-module without nonzero postprojective direct summand, and $\cal{X}^{-}_q$ is a full translation subquiver of the connecting component $\cal{C}_{T^{-}_q}$ of $\Gamma (\mod B^{-}_q)$ determined by $T^{-}_q$, which is closed under successors in $\cal{C}_{T^{-}_q}$ and consists of torsion $B^{-}_q$-modules.
       \item $B^{+}_q$ is a full convex subcategory of $\hat B$. Moreover, $B^{+}_q = \End_{\kk Q} (T^{+}_q)$, where $T^{+}_q$  is a tilting $\kk Q$-module without nonzero preinjective direct summand and $\cal{X}^{+}_q$ is a full translation subquiver of the connecting component $\cal{C}_{T^{+}_q}$ of $\Gamma (\mod B^{+}_q)$ determined by $T^{+}_q$, which is closed under predecessors in $\cal{C}_{T^{+}_q}$ and consists of torsion-free $B^{+}_q$-modules.
       \item We have $\hat B^{-}_q = \hat B =  \hat B^{+}_q$, $\nu_{\hat B}(B^{-}_q) = B^{-}_{q+2}$ and $\nu_{\hat B}(B^{+}_q) = B^{+}_{q+2}$.
        \item There is a reflection sequence of sinks $i_1,\dotsc,i_r$ of $Q_{B_q^-}$ (possibly empty) such that $B_q^+=S_{i_r}^+\cdots S_{i_1}^+B_q^-$ and
             $B_q=T_{i_r}^+\cdots T_{i_1}^+B_q^-$ is the support algebra of $\cal{R}_q$.
       \item There is a reflection sequence of sinks $j_1,\dotsc,j_s$ of $Q_{B_{q-1}^+}$ (possible empty) such that $B_{q}^-=S_{j_s}^+\cdots S_{j_1}^+B_{q-1}^+$ and
       $D_q=T_{j_s}^+\cdots T_{j_1}^+B_{q-1}^-$ is the support algebra of  $\cat X_q$.

       In particular, $\hat B$ is locally support-finite.
         \end{enumerate}
\end{thm}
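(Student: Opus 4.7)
The plan is to construct each $B_q^{\pm}$ as the support algebra of an appropriate section located inside $\cat X_q$, exploit the canonical decomposition of Theorem 5.1 to establish convexity and the tilted structure, and then derive (d) and (e) from the reflection/branch-extension machinery developed in Sections 2.4 and 3.1.

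Since $\cat X_q$ is acyclic with stable part $\Ent Q$, I would first locate inside it a right section $\Sigma_q^-$ of shape $Q$, placed at the boundary between the chosen $\cat X_q^-$ and the rest of $\cat X_q$, and dually a left section $\Sigma_q^+$ at the boundary with $\cat X_q^+$. Setting $B_q^{\pm}=\hat B/\Ann_{\hat B}(\Sigma_q^{\pm})$, I would invoke the characterisation of tilted algebras via sections (used already in 2.3 and 3.1) to conclude that $B_q^{\pm}$ is tilted of type $Q$, say $B_q^{\pm}=\End_{\kk Q}T_q^{\pm}$. That $T_q^-$ carries no postprojective summand is forced by the choice of $\cat X_q^-$: it is closed under successors in $\hat B$ and its objects are generated by the slice module of $\Sigma_q^-$, so the torsion class determined by $T_q^-$ absorbs all the relevant $\kk Q$-modules. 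The preinjective-free property of $T_q^+$ is dual.

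For (a)-(c), full convexity of $B_q^{\pm}$ inside $\hat B$ follows from the separation property (d) of Theorem 5.1: any path in $\ind\hat B$ between two $B_q^-$-objects must thread through $\cat X_q$, and hence through the support of the slice; the identification $\hat B_q^{\pm}=\hat B$ is then the standard Happel--Waldmann fact that tilted algebras of the same hereditary type share a repetitive category. The $\nu$-periodicity in (c) is immediate from Theorem 5.1(c), since $\nu_{\hat B}$ maps $\Sigma_q^{\pm}$ to $\Sigma_{q+2}^{\pm}$ up to the chosen identifications. For (d), I would apply Proposition 2.4 to $B_q^-$ and $B_q^+$: they have a common wild concealed quotient $C$ (coextension and extension of the same core), and the passage from $B_q^-$ to $B_q^+$ is realised by iterated APR-tilts at sinks lying in the coextension branches, producing the reflection sequence $i_1,\dotsc,i_r$; the iterated one-point extension $B_q=T^+_{i_r}\cdots T^+_{i_1}B_q^-$ is then exactly the wild quasiserial construction of Section 3 and captures the support of $\cat R_q$. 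Assertion (e) is analogous, applied across the boundary between $\cat X_{q-1}\vee\cat R_{q-1}$ and $\cat X_q$. Local support-finiteness drops out: each indecomposable of $\hat B$ lies in some $\cat X_q\vee\cat R_q$ and, by separation, can have nonzero morphisms only to indecomposables supported by $D_q$, $B_q$, or $D_{q+1}$, each of which is finite-dimensional.

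The main obstacle will be the first step: producing the sections $\Sigma_q^{\pm}$ and proving that the induced algebras are genuine full convex subcategories of $\hat B$ rather than mere quotients, while simultaneously identifying $T_q^{\pm}$ as tilting $\kk Q$-modules with the prescribed postprojective/preinjective deficiency. This requires matching the connecting component of the would-be tilted algebra $B_q^{\pm}$ with the corresponding slab $\cat X_q$ of the canonical decomposition, and the wild case (in contrast to the Euclidean case treated in \cite{AST}) makes the combinatorial bookkeeping for the branches and the common concealed core substantially heavier.
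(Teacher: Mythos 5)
The first thing to say is that the paper contains no proof of this statement: it is quoted directly from Erdmann--Kerner--Skowro\'nski \cite{EKS}, Theorem 3.5. Measured against the actual argument in that reference, your sketch inverts the logical order. In \cite{EKS} one does not start from the canonical decomposition and extract the algebras from sections; one constructs the $B_q^{\pm}$ bottom-up by the reflection procedure of Section 2.5, where each $T_i^+$ and $S_i^+$ produces \emph{by construction} a full convex subcategory of $\hat B$, and one proves at each reflection step --- using the branch extension/coextension structure over a common wild concealed core $C_q$ (Proposition 2.4) and the fact that the relevant sinks lie in the branches --- that the reflected algebra is again tilted of type $Q$ with the prescribed postprojective/preinjective deficiency. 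The canonical decomposition of Theorem 5.1 is then \emph{derived} by gluing the connecting components of the $B_q^{\pm}$ with the families $\cal{R}_q$, rather than being available as an input.

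This inversion is where the genuine gaps lie. (1) You define $B_q^{\pm}$ as the quotient $\hat B/\Ann_{\hat B}(\Sigma_q^{\pm})$, but parts (a)--(c) require a full \emph{convex subcategory} of $\hat B$; the passage from one to the other is precisely the content of the theorem, and you explicitly defer it as ``the main obstacle'' without resolving it. Your convexity argument via the separation property also conflates paths in $\ind\hat B$ with paths in the quiver of $\hat B$ (which is what convexity of a subcategory refers to in Section 2.1); these are related only through a nontrivial translation. (2) For part (d) you invoke ``the reflection machinery developed in Sections 2.4 and 3.1,'' but Section 3.1 obtains the existence of the reflection sequence $i_1,\dotsc,i_r$ through the branches by citing \cite{EKS}, Theorem 3.5 --- the very statement under proof --- so within this paper your argument for (d) is circular. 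What is actually needed is the \cite{EKS} analysis showing that the sinks to be reflected all lie in the coextension branches of $B_q^-$ and that the intermediate one-point extensions create exactly the projective-injectives supporting $\cal{R}_q$. (3) A smaller point: the identification $\hat B_q^{\pm}=\hat B$ is the Hughes--Waschb\"usch theorem \cite{HW} (not ``Happel--Waldmann''), and it can only be invoked once $B_q^{\pm}$ is already known to be tilted of type $Q$ and a full convex subcategory, i.e.\ after step (1) is complete, not in parallel with it.
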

Thus, the reader sees that each algebra $B_q$ is a wild quasiserial algebra. Also, the local support finiteness of $\hat{B}$ implies that, if $G$ is an admissible group of automorphisms of $\hat{B}$, then the push-down functor $F_\lambda:\mod \hat B \to \mod (\hat{B}/G)$, associated to the Galois covering $\hat B \to \hat{B}/G$, is dense, see \cite{DS}, \cite{DS2}. In particular, $F_\lambda$ induces an isomorphism between the orbit quiver $\Gamma(\mod \hat{B}) / G$  of $\Gamma(\mod \hat{B})$ under the action of $G$ and the Auslander-Reiten quiver $\Gamma(\mod (\hat{B}/G)$ of $\hat{B}/G$, see \cite{G}, Theorem 3.5.

\section{Proofs of the main theorems}
\subsection{Selfinjective algebras of wild tilted type}

Let $Q$ be a wild quiver. We recall from \cite{EKS} that a  selfinjective algebra $A$ is of \emph{wild tilted type} $Q$ if there exist a tilted algebra $B$ of wild type $Q$ and an admissible infinite cyclic group $G$ of $\kk$-linear automorphisms of $\hat B$ such that $ A = \hat B / G$.

Examples of such algebras are provided by trivial extensions of tilted algebras of type $Q$.

We are now able to prove the main result of the paper.

\begin{thm}
    Let $A$ be a selfinjective algebra of wild tilted type. Then $\repdim A = 3$.
\end{thm}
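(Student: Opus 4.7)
The lower bound $\repdim A \ge 3$ is immediate: $A$ is non-semisimple, so $\repdim A \ge 2$, and $A$ is representation-infinite (since it sits above the repetitive category of a wild tilted algebra), so Auslander's classical characterisation rules out equality. The bulk of the work is the upper bound $\repdim A \le 3$, which I would obtain by explicitly constructing an Auslander generator $M$ of $\mod A$ and verifying the approximation criterion of Lemma 2.1(b).

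Write $A = \hat B / G$ as in Section 6.1, with $B$ tilted of wild type $Q$ and $G$ infinite cyclic. By Theorem 5.2(e) the repetitive category $\hat B$ is locally support-finite, so the push-down functor $F_\lambda: \mod \hat B \to \mod A$ associated with the Galois covering $\hat B \to A$ is dense, and it induces an isomorphism between $\Gamma(\mod \hat B)/G$ and $\Gamma(\mod A)$. By Theorem 5.1 the Auslander--Reiten quiver of $\hat B$ splits as $\bigvee_{q \in \Ent}(\cal X_q \vee \cal R_q)$ with $\nu_{\hat B}$ shifting the index $q$ by two; Proposition 5.1 tells us that a generator of $G$ is a strictly positive automorphism of $\hat B$ compatible with this $\Ent$-indexing. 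Consequently the $G$-action on $\Ent$ has a finite fundamental domain $\{q_1, \dots, q_t\}$, and every indecomposable $A$-module is isomorphic to $F_\lambda(\widetilde N)$ for some indecomposable $\widetilde N \in \cal X_{q_i} \vee \cal R_{q_i}$.

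By Theorem 5.2(d), for each $i$ the support algebra $B_{q_i}$ of $\cal R_{q_i}$ is a wild quasiserial algebra in the sense of Definition 3.1, and Theorem 3.3 supplies the Auslander generator $M_{q_i} = B_{q_i} \oplus \D B_{q_i} \oplus T^-_{q_i} \oplus T^+_{q_i} \oplus \D B^-_{q_i}$. The reflection sequences of Theorem 5.2(d)--(e) realise a suitable finite convex subcategory of $\hat B$ supporting $\bigvee_{i=1}^t (\cal X_{q_i} \vee \cal R_{q_i})$ as an iterated tilted gluing $B_{q_1} * \cdots * B_{q_t}$ in the sense of Section 4, the glued components being precisely the $\cal X_{q_i}$. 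The slice summands $T^-_{q_{i+1}}$ occur in each $M_{q_{i+1}}$, so Corollary 4.3 applies and exhibits $\widetilde M = \bigoplus_{i=1}^t M_{q_i}$ as an Auslander generator of that gluing. I would take $M = F_\lambda(\widetilde M)$ as candidate Auslander generator of $\mod A$.

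It remains to verify that every indecomposable $A$-module $N$ admits an $\add M$-approximating sequence. Lift $N$ to an indecomposable $\widetilde N$ in the chosen finite convex subcategory via density of $F_\lambda$; the $\add \widetilde M$-approximating sequence for $\widetilde N$ supplied by the proof of Theorem 3.3 (through Proposition 3.2), pushed down along the exact functor $F_\lambda$, yields a short exact sequence in $\mod A$ with $N$ as cokernel and the other two terms in $\add M$. The main obstacle is verifying the approximation property itself: a morphism $f: M' \to N$ with $M' \in \add M$ indecomposable need not lift to a single morphism in $\mod \hat B$, since the Galois covering identity $\Hom_A(F_\lambda X, F_\lambda Y) = \bigoplus_{g \in G} \Hom_{\hat B}(X, g \cdot Y)$ decomposes $f$ as a finite sum of lifts $\widetilde f_g: \widetilde M' \to g \cdot \widetilde N$, finiteness being guaranteed by local support-finiteness. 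Because $G$ acts compatibly with the $\Ent$-grading of Theorem 5.1, each translate $g \cdot \widetilde N$ lies in a $G$-image of the chosen convex subcategory, where the corresponding $G$-translate of the approximating sequence factors $\widetilde f_g$ through a module in $\add(g \cdot \widetilde M)$. Pushing these factorisations down and summing over $g$ produces the required factorisation of $f$ through $M_0 \to N$, so Lemma 2.1(b) yields $\repdim A \le 3$ and completes the proof.
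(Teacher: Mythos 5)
Your proposal follows essentially the same route as the paper's proof: both establish the lower bound from representation-infiniteness, assemble an Auslander generator over a fundamental domain of the canonical decomposition of $\Gamma(\mod \hat B)$ from the wild quasiserial building blocks via Theorem 3.3 and Corollary 4.3, push it down along the dense functor $F_\lambda$, and verify the approximation property using the Galois covering decomposition of $\Hom$ together with the separating property of the components $\cal{X}_q$. The only divergence is cosmetic: the paper's explicit summand list for $M_q$ (all of $\cal{Y}_q$ plus the injective $B_q^-$-modules and projective $\hat B$-modules in $\cal{R}_q$) differs in detail from your $\bigoplus_i M_{q_i}$, but both choices serve the same purpose in the argument.
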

\begin{proof}
   Because  an algebra is representation-finite if and only if its representation dimension is two, see \cite{A}, and our algebra $A$ is representation-infinite, it suffices to prove that $\repdim A \leq 3$.
   Let $B$ be a tilted algebra of wild tilted type $Q$ and $G$ an infinite cyclic admissible group of automorphisms of $\hat B$ such that $A=\hat B/G$. Then, $G$ is generated by a strictly positive automorphism $g$ of $\hat B$. Also, because of Theorem 5.1,
   $\Gamma(\mod \hat B)$ admits a canonical decomposition
    \[ \Gamma(\mod \hat B)=\bigvee_{q\in\Ent}(\cat X_q\vee \cat R_q).\]
   Furthermore, for each $q\in\Ent$, we have algebras $B_q^-$, $B_q$ and $B_q^+$ which satisfy the conditions of Theorem 5.2.

   Because $G$ also acts on the translation quiver $\Gamma(\mod \hat B)$, there exists $m>0$ such that
   $g(\cal X_q)=\cal X_{q+m}$ and $g(\cal R_q)=\cal R_{q+m}$ for each $q\in\Ent$.
   Then it follows from the definitions of $B^-_q$, $B_q$, $B_q^+$ that we also have
      \[ g(B_q^-)=B_{q+m}^-,\quad g(B_q)=B_{q+m}\quad\text{and } g(B_q^+)=B_{q+m}^+\]
   for each $q\in\Ent$.

   Because of Theorem 5.2, we may choose in the connecting component  $\cal{C}_{T^{-}_q}$ of $B^-_q$ a right section $\Sigma^-_q$
   of wild type $Q$ such that the full translation subquiver $\cat X_q^-$ given by all successors of $\Sigma^-_q$ in $\cal{C}_{T^-_q}$ consists of modules having nonzero restriction to the underlying wild concealed full convex subcategory $C_q$. Moreover, $\cat X^-_q$ is also a full translation subquiver of $\cat X_q$ closed under successors.

   Similarly, we may choose in the connecting component $\cal{C}_{T^+_q}$  of $B^+_q$ a left section $\Sigma^+_q$ of wild type $Q$ such that the full translation subquiver $\cat X_q^+$  given by all predecessors of $\Sigma^+_q$ in $\cal{C}_{T^+_q}$ consists of modules having nonzero restriction to $C_q$. Moreover,  $\cat X_q^+$ is also a full translation subquiver of $\cat X_{q+1}$ closed under predecessors.

   We may assume that $\Sigma^-_q, \Sigma^+_q$ are chosen so that $g\left(\Sigma_q^-\right)=\Sigma_{q+m}^-$ and $g\left(\Sigma_q^+\right)=\Sigma^+_{q+m}$.
   Consequently, $g(\cat X_q^-)=\cat X^-_{q+m}$ and $g(\cat X_q^-)=\cat X^+_{q+m}$ for each $q\in\Ent$.

   For a given $q\in\Ent$, denote by $\cat Y_q$ the finite full translation subquiver of $\cat{X}_q$ consisting of all modules which are successors of $\Sigma_{q-1}^+$
   and predecessors of $\Sigma^+_q$. Then, clearly, every projective-injective in $\cat{X}_q$ lies in $\cat Y_q$.
   Moreover, we have $g(\cat Y_q)=\cat Y_{q+m}$ for any $q$.

   For each $q$, let $M_q$ denote the direct sum of all modules in $\cat Y_q$, all injective $B_q^-$-modules lying in $\cat R_q$ and all projective $\hat B$-modules lying in $\cat R_q$.
   Then, clearly $g(M_q)=M_{q+m}$ for each $q$.

   Finally, we set $M=\bigoplus_{i=0}^{m-1}M_i$.

   Let $F_\lambda:\mod \hat B\to \mod A$ be the push-down functor associated to the Galois covering $F:\hat B\to\hat B/G=A$.
   We claim that $F_\lambda(M)$ is an Auslander generator for $\mod A$.

   First, $F_\lambda(M)$ admits $A$ as a direct summand. Indeed, any indecomposable projective $A$-module is of the form $F_\lambda(P)$, for some
   indecomposable projective $\hat B$-module $P$.
   The definition of $M$ yields an $r\in\Ent$ such that $P$ is a direct summand of ${^{g^r}}\!M$.
   But then $F_\lambda(P)$ is a direct summand of $F_\lambda({^{g^r}\!M})=F_\lambda (M)$.
   We now prove that $\gldim \End M\le 3$, which will complete the proof.

   Let $L$ be an indecomposable $A$-module which is not a direct summand of $F_\lambda(M)$.
   Because the push-down functor is dense, there exist
    $i$ such that $0\le i<m$ and an indecomposable module $X\in(\cat X_i^-\setminus \Sigma_i^-)\vee\cat R_i\vee(\cat X_{i+1}^+\setminus\Sigma_i^+)$ such that $L=F_\lambda(X)$.
   Moreover, if $X\in\cat R_i$, then $X$ is neither a projective $B_i$-module, nor an injective $B_i^-$-module.
   Because of Theorem 3.3 and Corollary 4.3, there exists an $\add M_i$-minimal approximating sequence
   \[ \xymatrix@1{0\ar[r]&U\ar[r]^u&V\ar[r]^v&X\ar[r]&0}\]
   in $\mod\hat B$. Applying the exact functor $F_\lambda$ yields an exact sequence
   \[ \xymatrix@1{0\ar[r]&F_\lambda(U)\ar[r]^{F_\lambda(u)}&F_\lambda(V)\ar[r]^{F_\lambda(v)}&F_\lambda(X)\ar[r]&0}\]
   with $F_\lambda(U)$, $F_\lambda(V)\in\add F_\lambda(M)$.
   We claim that $F_\lambda(v):F_\lambda(V)\to F_\lambda(X)=L$ is an $\add F_\lambda(M)$-approximation.
   Let $h:F_\lambda(M)\to F_\lambda(X)=L$ be a nonzero morphism.
   The push-down functor $F_\lambda:\mod \hat B\to \mod A$ is a Galois covering of module categories.
   In particular, it induces a vector space isomorphism
      \[\Hom_A(F_\lambda(M),F_\lambda(X))\cong \bigoplus_{r\in\Ent}\Hom_{\hat B}({^{g^r}}\!M,X).\]
     Thus, for each $r\in\Ent$, there exists a morphism $f_r:{^{g^r}}\!M\to X$, all but finitely
     many of the $f_r$ being zero, such that $h=\sum_{r\in\Ent}F_\lambda(f_r)$.

     We claim that, for any $r\ge 1$, we have $\Hom_{\hat B}({^{g^r}}\!M,X)=0$.
     Indeed, $X\in\cat X_i\vee\cat R_i\vee\cat X_{i+1}$ for some $i$ with $0\le i<m$.
     On the other hand, for $r\ge 1$, the module ${^{g^r}}\!M$ is a direct sum of modules lying in $\bigvee_{j=0}^{m-1}(\cat X_{j+mr}\vee\cat R_{j+mr})$.
     This establishes our claim.

     Let now $f_r:{^{g^r}}\!M\to X$ be a nonzero morphism in $\mod \hat B$ for some $r\leq 0$.
     Because of Theorem 5.1,  $f_r\,$ factors through a module in $\add M_i$.
     Because $v$ is an $\add M_i$-approximation, there exists a morphism $w_r:{^{g^r}}\!M\to V$ in $\mod \hat B$ such that $f_r=vw_r$.
     Then $F_\lambda(f_r)=F_\lambda(v)F_\lambda(w_r)$ with $F_\lambda(w_r):F_\lambda(M)\to F_\lambda(V)$ because
     $F_\lambda({^{g^r}}\!M)=F_\lambda(M)$.
     Summing up, this yields a morphism $w:F_\lambda(M)\to F_\lambda(V)$ such that $h=F_\lambda(v)w$. Because the existence of $\add F_\lambda(M)$-approximations yields the existence of minimal $\add F_\lambda(M)$-approximations, see 2.2 above , this completes the proof.
\end{proof}

\subsection{Selfinjective algebras with acyclic generalised standard components}

\begin{corollary}
Let $A$ be a selfinjective algebra.
\begin{enumerate} [\indent(a)]
\item If $\Gamma(\mod A)$ has an acyclic generalised standard left stable full translation subquiver which is closed under predecessors, then $\repdim A = 3$.
\item If $\Gamma(\mod A)$ has an acyclic generalised standard right stable full translation subquiver which is closed under successors, then $\repdim A = 3$.
\end{enumerate}
\end{corollary}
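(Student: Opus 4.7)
The plan is to combine a classification result for selfinjective algebras with the two existing deep results on selfinjective algebras of tilted type: the main theorem of \cite{AST} for the euclidean case and Theorem~A above for the wild case.

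First, I would appeal to the structure theorem of Skowro\'nski and Yamagata \cite{SY}, which asserts that any connected selfinjective algebra $A$ whose Auslander-Reiten quiver $\Gamma(\mod A)$ admits an acyclic generalised standard left stable full translation subquiver closed under predecessors (or the dual version, a right stable one closed under successors) is of the form $A \cong \hat B/G$, where $B$ is a tilted algebra and $G$ is an infinite cyclic admissible group of $\kk$-linear automorphisms of the repetitive category $\hat B$.

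Second, I would observe that the hypothesis forces $A$ to be representation-infinite, so $B$ cannot be tilted of Dynkin type. For if $A$ were representation-finite and non-semisimple, then each connected component of $\Gamma(\mod A)$ would be of the form $\Ent\Delta/H$ for some Dynkin quiver $\Delta$ and a nontrivial group $H$ of automorphisms, and every stable translation subquiver closed under predecessors (or successors) would then contain oriented cycles, contradicting the acyclicity assumption. Consequently, $B$ is tilted of either euclidean or wild type.

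Third, I would conclude by invoking the corresponding result depending on the type of $B$. If $B$ is tilted of euclidean type, then $A$ is a selfinjective algebra of euclidean type and the main theorem of \cite{AST} gives $\repdim A = 3$. If $B$ is tilted of wild type, then $A$ is a selfinjective algebra of wild tilted type and Theorem~6.1 above yields $\repdim A = 3$. Part~(b) then follows from part~(a) by the duality $A \mapsto A^{\op}$, which exchanges the two hypotheses.

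The main obstacle is identifying the precise form of the Skowro\'nski--Yamagata structure theorem that applies here, and verifying that the present (slightly weaker) hypothesis of an acyclic generalised standard stable \emph{subquiver} closed under predecessors, rather than a full Auslander--Reiten component, still suffices to force the orbit-algebra structure $A \cong \hat B/G$. Once that reduction is in hand, the rest of the argument is a formal case split invoking Theorem~A and \cite{AST}.
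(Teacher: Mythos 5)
Your proposal is correct and follows essentially the same route as the paper: the paper's proof simply cites \cite{SY} (Theorem 5.5) to conclude that under these hypotheses $A$ is of euclidean or wild tilted type, and then invokes \cite{AST} and Theorem 6.1, exactly as you do. The extra steps you spell out (ruling out the Dynkin case, the duality for part (b)) are implicitly absorbed into the cited Skowro\'nski--Yamagata theorem, which is stated precisely for stable translation subquivers closed under predecessors or successors, so the concern you raise at the end is already covered by that reference.
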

\begin{proof}
It follows from \cite{SY} Theorem 5.5 that, under these hypotheses, $A$ is of euclidean or wild tilted type. The statements then follow from \cite{AST} and Theorem 6.1 above.
\end{proof}

Our second main theorem is then.

\begin{thm}
Let $A$ be a selfinjective algebra whose Auslander-Reiten quiver admits an acyclic generalised standard component. Then $\repdim A = 3$.
\end{thm}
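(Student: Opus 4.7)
The plan is to reduce Theorem~B to Corollary~6.2 by extracting from the given acyclic generalised standard component $\cal C$ of $\Gamma(\mod A)$ a left stable full translation subquiver closed under predecessors, to which part~(a) of the Corollary applies. (If the natural construction is degenerate one instead passes to the dually constructed right stable subquiver closed under successors and uses part~(b).)

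Since $A$ is selfinjective, the indecomposable projective and the indecomposable injective $A$-modules coincide, so any projective vertex of $\cal C$ is automatically a projective-injective; in particular $\tau_A$ and $\tau_A^{-1}$ are both undefined on such a vertex. I would define $\cal C^+$ to be the full subquiver of $\cal C$ whose vertices are the indecomposable modules in $\cal C$ that are neither projective-injective nor successors, within $\cal C$, of any projective-injective vertex of $\cal C$. By the observation above, $\tau_A$ is defined on every vertex of $\cal C^+$, so $\cal C^+$ is left stable. The subquiver $\cal C^+$ is furthermore closed under predecessors in $\cal C$: a predecessor of a module that avoids all projective-injective successors must itself avoid them. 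Since irreducible morphisms and Auslander--Reiten translations preserve connected components of $\Gamma(\mod A)$, closure under predecessors then holds in the full Auslander--Reiten quiver as well.

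The acyclicity of $\cal C$ is inherited by $\cal C^+$ directly. The generalised standard property is also inherited: any nonzero element of $\rad_A^{\infty}(X,Y)$ with $X,Y$ in $\cal C^+$ would be a nonzero element of $\rad_A^{\infty}(X,Y)$ with both endpoints in $\cal C$, contradicting the generalised standard assumption on $\cal C$. Provided $\cal C^+$ is infinite, Corollary~6.2(a) applies and yields $\repdim A = 3$.

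The main obstacle I expect lies in the non-degeneracy argument: it must be shown that $\cal C^+$ is large enough for \cite{SY}~Theorem~5.5 to apply, and in particular nonempty. This reduces to the observation that $\cal C$ contains only finitely many projective-injective vertices (since $A$ has finitely many isoclasses of projectives), so the acyclicity of $\cal C$ forces the set of modules of $\cal C$ that are not successors of any such vertex to be infinite; if $\cal C$ contains no projective-injective vertex at all, then $\cal C^+ = \cal C$. In the exceptional case where this set would still be too small (for instance if every module of $\cal C$ is simultaneously a successor of some projective-injective) one replaces $\cal C^+$ by the dually defined $\cal C^-$ of modules that are neither projective-injective nor predecessors of any projective-injective, which is right stable and closed under successors, and one invokes Corollary~6.2(b) instead. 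In either case the conclusion $\repdim A = 3$ follows.
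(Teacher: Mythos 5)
Your overall route is the paper's own: Theorem~B is deduced by feeding the acyclic generalised standard component $\cal C$ into Corollary~6.2, whose proof rests on \cite{SY}~(5.5) together with \cite{AST} and Theorem~6.1. Your construction of $\cal C^+$ (discard the projective-injective vertices of $\cal C$ and all their successors) and the verifications that it is full, acyclic, generalised standard and closed under predecessors are correct; left stability is also fine, provided you add that $\tau X$ actually lies in $\cal C^+$ for every vertex $X$ of $\cal C^+$, which follows from closure under predecessors because $\tau X$ precedes $X$ through the middle term of the almost split sequence.

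The gap is exactly where you located it, and your proposed repair does not close it. The assertion that ``the acyclicity of $\cal C$ forces the set of modules of $\cal C$ that are not successors of any projective-injective vertex to be infinite'' is false: acyclicity is perfectly compatible with every vertex being a successor of one of finitely many fixed vertices (the postprojective component of a wild hereditary algebra is acyclic, yet every one of its modules is a successor of a projective). Passing to the dual $\cal C^-$ does not help by itself, because you never exclude the possibility that $\cal C^+$ and $\cal C^-$ are \emph{simultaneously} empty, that is, that every module of $\cal C$ lies on a path from a projective-injective to a projective-injective. Since $\cal C$ is infinite and locally finite and contains only finitely many projective-injectives, that scenario would produce paths of unbounded length between two fixed modules of $\cal C$; what rules this out is not acyclicity but the generalised standard hypothesis, through the results of \cite{S} (a generalised standard component is almost periodic, and only finitely many of its modules lie on paths between two fixed modules). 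Once that input is invoked, at least one of $\cal C^+$, $\cal C^-$ is nonempty, hence infinite by stability and acyclicity, and your argument goes through; alternatively, one can observe that \cite{SY}~(5.5) applies directly to an acyclic generalised standard component of a selfinjective algebra, which is presumably why the paper treats the deduction of Theorem~B from the Corollary as immediate.
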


\begin{proof}
This follows immediately from the above corollary.
\end{proof}

\section{Examples}

The aim of this section is to present illustrative examples.
The first two describe wild quasiserial algebras
of a different nature.
The third example provides the description of extension and reflection
sequences of algebras associated to a tilted algebra of wild type.

\subsection{Example}
\label{ex:1}

Let $B$ be the algebra given by the quiver $Q$
\[
 \xymatrix{
  &&& 10' \ar[dd]^{\pi} &&&& 9' \ar[llll]_{\varepsilon'} \ar[dddd]^{\varphi} \\
  && 3 \ar[ld]_{\beta} && 6 \ar[ld]_{\xi} \\
  1 & 2 \ar[l]_{\delta} && 5 \ar[lu]_{\alpha} \ar[ld]^{\gamma} \ar[dd]^{\mu} &&
      1' \ar[lu]_{\theta} \ar[ld]^{\lambda} & 2' \ar[l]_{\delta'} \\
  && 4 \ar[lu]^{\sigma} && 7 \ar[lu]^{\eta} & 8 \ar[l]^{\varrho} \\
  &&& 9 \ar[d]^{\varepsilon} &&&& 11 \ar[llll]^{\omega} \\
  &&& 10
 }
\]
bound by the relations
$\alpha\beta = \gamma\sigma$,
$\varrho\eta\gamma\sigma = 0$,
$\xi\mu = 0$,
$\eta\mu = 0$,
$\omega\varepsilon = 0$,
$\theta\xi = \lambda\eta$,
$\varepsilon'\pi\mu = \varphi\omega$,
$\pi \alpha = 0$,
$\pi \gamma = 0$,
$\delta'\lambda\eta\gamma\sigma\delta = 0$.
We claim that $B$ is a wild quasiserial algebra.
Let $H$ be the path algebra $\kk \Delta$ of the wild
quiver $\Delta$ of the form
\[
 \xymatrix{
  & \bullet \ar[rd] &&&&&&& \bullet \\
  && \bullet \ar[r] & \bullet \ar[r] & \bullet \ar[r] &
     \bullet \ar[r] & \bullet \ar[r] & \bullet \ar[ru] \ar[rd] \\
  \bullet \ar[r] & \bullet \ar[ru] &&&&&&& \bullet
 }
\]
(of type $\tilde{\tilde{\mathbb{D}}}_9$).
Let $B^-$ be the full convex subcategory of $B$ given
by the objects $1,2,3,4,5,6,7,8,9,10,11$,
$B^+$ the full convex subcategory of $B$ given
by the objects $3,4,5,6,7,8,11,1',2',9',10'$,
and $C$ be the full convex subcategory of $B$ given
by the objects $3,4,5,6,7,8$.
We note that $C$ is a wild hereditary algebra
of type $\tilde{\tilde{\mathbb{D}}}_4$.
Applying \cite{EKS}(2.13) we conclude that
the Auslander-Reiten quiver of $B^-$
has a decomposition
\[
  \Gamma(\mod B^-) = \cal{P}_{B^-} \vee \cal{R}_{B^-} \vee \cal{Q}_{B^-} ,
\]
where
$\cal{P}_{B^-}$ is a postprojective component containing all
indecomposable projective $B^-$-modules and a section of the
form
\[
 \xymatrix{
  P_3 \ar[rd] &&&&&&& P_6 \\
  & \bullet \ar[r] & \bullet \ar[r] & P_5 \ar[r] &
     \bullet \ar[r] & \bullet \ar[r] & \bullet \ar[ru] \ar[rd] \\
  P_4 \ar[ru] &&&&&&& P_7 \ar[r] & P_8 \,,\!\!
 }
\]
$\cal{Q}_{B^-}$ is the preinjective component $\cal{Q}_C$
of $\Gamma (\mod C)$,
and $\cal{R}_{B^-}$ consists of infinitely many regular components
of type $\Ent\mathbb{A}_{\infty}$ and a component
$\Clu^-$ of the form
\[
 \xymatrix@C=.5pc@R=.5pc{
  & \bullet \ar[rrdd] &&&& \!\!I_{10}\!\! \ar[rrdd] \\
  \ar[ru] \\
  &&& \bullet \ar[rruu] \ar[rrdd] &&&& \bullet \ar[rrdd] &&&& \!\!I_{11}\!\! \\
  && \ar[ru] \\
  & &&&& \bullet \ar[rruu] \ar[rrdd] &&&& \!I_{9}\! \ar[rruu] \ar[rrdd]  \\
  &&&& \ar[ru] \\
  & &&&&&& \bullet \ar[rruu] \ar[rrdd] &&&& \!S_{5}\! \ar[rrdd] &&&& \!I_{1}\! \ar[rrdd] \\
\save[] *+[c]{\cdots} \restore
  &&&&&& \ar[ru] \\
  & &&&& &&&& \bullet \ar[rruu] \ar[rrdd] &&&& \bullet \ar[rruu] \ar[rrdd] &&&& \!I_{2}\! \ar[rrdd] \\
  &&&& &&&& \ar[ru] \\
  & && &&&& &&&& \bullet \ar[rruu] \ar[rrdd] &&&& \bullet \ar[rruu] \ar[rrdd] &&&&
      \!U \!\ar[rrdd] &&&& \bullet \ar@{-}[rd] \\
  &&&& &&&& && \ar[ru] &&&& &&&& &&&& && \\
  & &&&& &&&& &&&& \bullet \ar[rruu] \ar@{-}[rd] &&&& \bullet \ar[rruu] \ar@{-}[rd] &&&&
      \bullet \ar[rruu] \ar@{-}[rd] &&&
\save[] *+[c]{\cdots} \restore
      \\
  &&&& &&&& &&&& \ar[ru] &&&& \ar[ru] &&&& \ar[ru] && \\
 }
\]
whose stable part, obtained from $\Clu^-$ by deleting
the $\tau_{B^-}$-orbits of the injective $B^-$-modules
$I_1,I_2,I_9,I_{10},I_{11}$,
is of type $\Ent\mathbb{A}_{\infty}$.
In particular, we conclude that
$B^- = \End_H(T^-)$ for a tilting $H$-module
$T^-$ without nonzero postprojective direct summands.
We also note that $B^-$ is a branch coextension of $C$,
using the quasisimple regular $C$-modules $U$ and $S_5$
with the dimension vectors
\begin{align*}
 \bdim U &=
   \begin{array}{*{4}{@{\,}c}@{\,}}1&&1&\vspace{-2.25mm}\\&1&&\vspace{-2.25mm}\\1&&1&0\end{array}
 &
 \bdim S_5 &=
   \begin{array}{*{4}{@{\,}c}@{\,}}0&&0&\vspace{-2.25mm}\\&1&&\vspace{-2.25mm}\\0&&0&0\end{array}
\end{align*}
and the branches
\begin{align*}
  \xymatrix{1 & 2 \ar[l]_{\delta}}
  &&
  \xymatrix{10 & 9 \ar[l]_{\varepsilon} & 11 \ar[l]_{\omega}}
  \mbox{(with $\omega \varepsilon = 0$)}.
\end{align*}
Observe now that $1,2,10,9$ is a reflection sequence
of sinks of $Q_{B^-}$ such that
$B^+ = S^+_9 S^+_{10} S^+_2 S^+_1 B^-$
and
$B = T^+_9 T^+_{10} T^+_2 T^+_1 B^-$.
Further,
the Auslander-Reiten quiver of $B^+$
has a decomposition
\[
  \Gamma(\mod B^+) = \cal{P}_{B^+} \vee \cal{R}_{B^+} \vee \cal{Q}_{B^+} ,
\]
where
$\cal{P}_{B^+}$ is the postprojective component
of $\Gamma (\mod C)$,
$\cal{Q}_{B^+}$ is a preinjective  component containing all
indecomposable injective $B^+$-modules and a section of the
form
\[
 \xymatrix{
  I_3 \ar[rd] &&&&&&& I_6 \\
  & \bullet \ar[r] & \bullet \ar[r] & \bullet \ar[r] & I_5 \ar[r] &
     \bullet \ar[r] & \bullet \ar[ru] \ar[rd] \\
  I_4 \ar[ru] &&&&&&& I_7 \ar[r] & I_8 \,,\!\!
 }
\]
and $\cal{R}_{B^+}$ consists of infinitely many regular components
of type $\Ent\mathbb{A}_{\infty}$ and a component
$\Clu^+$ of the form
\[
 \xymatrix@C=.5pc@R=.5pc{
  &&&&& &&&&& &&&&& \!\!\!P_{11}\!\!\! \ar[rrdd] &&&& \bullet \ar[rrdd] &&&& \bullet \ar@{-}[rd] \\
  &&&&& &&&&& &&&&& &&&& &&&& & \\
  &&&&& &&&&& &&&&& && \!\!P_{9'}\!\! \ar[rruu] \ar[rrdd] &&&& \bullet \ar[rruu] \ar@{-}[rd] \\
  &&&&& &&&&& &&&&& &&&& &&& \\
  &&&&& &&&&& &&&&& \!\!\!P_{10'}\!\!\! \ar[rruu] \ar[rrdd] &&&& \bullet \ar[rruu] \ar@{-}[rd] \\
  &&&&& &&&&& &&&&& &&&& & \\
  &&&&& &&&& \!\!P_{2'}\!\! \ar[rrdd] &&&& \!\!S_{5}\!\! \ar[rruu] \ar[rrdd] &&&&
    \bullet \ar[rruu] \ar@{-}[rd] \\
  &&&&& &&&&& &&&&& &&& \\
  &&&&& && \!\!P_{1'}\!\! \ar[rruu] \ar[rrdd] &&&& \bullet \ar[rruu] \ar[rrdd] &&&&
    \bullet \ar[rruu] \ar@{-}[rd] \\
  &&&&& &&&&& &&&&& &
  &&&& &&&&
\save[] *+[c]{\cdots} \restore
  \\
  & \bullet \ar[rrdd] && && \!U\! \ar[rruu] \ar[rrdd] &&&&
    \bullet \ar[rruu] \ar[rrdd] &&&& \bullet \ar[rruu] \ar@{-}[rd] \\
  \ar[ru] &&&&& &&&&& &&&& \\
  &&& \bullet \ar[rruu] \ar[rrdd] &&&& \bullet \ar[rruu] \ar[rrdd] &&&&
    \bullet \ar[rruu] \ar@{-}[rd] \\
\save[] *+[c]{\cdots} \restore
  && \ar[ru] &&& &&&&& && \\
  &&&&& \bullet \ar[rruu] \ar@{-}[rd] &&&& \bullet \ar[rruu] \ar@{-}[rd] \\
  &&&& \ar[ru] & &&& \ar[ru] && \\
 }
\]
whose stable part, obtained from $\Clu^+$ by deleting
the $\tau_{B^+}$-orbits of the projective $B^+$-modules
$P_{1'},P_{2'},P_{10'},P_{9'},P_{11}$,
is of type $\Ent\mathbb{A}_{\infty}$.
In particular, we conclude that
$B^+ = \End_H(T^+)$ for a tilting $H$-module
$T^+$ without nonzero preinjective direct summands.
Moreover, $B^+$ is a branch extension of $C$,
using the quasisimple regular
modules
$U$ and $S_5$,
and the branches
\begin{align*}
  \xymatrix{1' & 2' \ar[l]_{\delta'}}
  &&
  \xymatrix{10' & 9' \ar[l]_{\varepsilon'} \ar[r]^{\varphi} & 11}
  .
\end{align*}
Then $B$ is a wild quasiserial algebra whose
Auslander-Reiten quiver
has a decomposition
\[
  \Gamma(\mod B) = \cal{P}_{B^-} \vee \cal{R}_{B} \vee \cal{Q}_{B^+} ,
\]
where $\cal{R}_{B}$ consists of infinitely many regular components
of type $\Ent\mathbb{A}_{\infty}$ and a component
$\Clu$ of the form
\[
 \xymatrix@C=.35pc@R=.35pc{
  & &&&& && \!\!\!\!\!\!P(10')\!\!\!\!\!\! \ar[rrdd] &&&& &&&& &&&& &&&&
      \!\!\!\!P(1')\!\!\!\! \ar[rrdd] &&&&
      \!\!\!\!P(2')\!\!\!\! \ar[rrdd] \\
  \\
  & \bullet \ar[rrdd] &&&& \bullet \ar[rruu] \ar[rrdd] &&&&
    \bullet \ar[rrdd] &&&& \!\!I_{11}\!\! \ar[rrdd] &&&&
    \bullet \ar[rrdd] &&&& \bullet \ar[rruu] \ar[rrdd] &&&&
    \bullet \ar[rruu] \ar[rrdd] &&&& \bullet \ar[rrdd] \\
  \ar[ru] &&&& &&&& &&&& &&&& &&&& &&&& &&&& &&&& \\
\save[] *+[c]{\cdots} \restore
  &&& \bullet \ar[rruu] \ar[rrdd] &&&& \bullet \ar[rruu] \ar[rrdd] &&&&
    \bullet \ar[rruu] \ar[rrdd] &&
    \save[] *+[c]{P(9')} \ar@{<-}[ll] \ar@{->}[rr] \restore
    && \bullet \ar[rruu] \ar[rrdd] &&&&
    \bullet \ar[rruu] \ar[rrdd] &&&& \bullet \ar[rruu] \ar[rrdd] &&&&
    \bullet \ar[rruu] \ar[rrdd] &&&& \bullet \ar@{-}[ru] \ar@{-}[rd]
& \save[] *+[c]{\cdots} \restore
\\
  && \ar[ru] && &&&& &&&& &&&& &&&& &&&& &&&& &&&& \\
  &&&&& \bullet \ar[rruu] \ar@{-}[rd] &&&& \bullet \ar[rruu] \ar@{-}[rd] &&&&
    \bullet \ar[rruu] \ar@{-}[rd] &&&& \bullet \ar[rruu] \ar@{-}[rd] &&&&
    \bullet \ar[rruu] \ar@{-}[rd] &&&& \bullet \ar[rruu] \ar@{-}[rd] &&&&
    \bullet \ar[rruu] \ar@{-}[rd] \\
  &&&& \ar[ru] &&&& \ar[ru] &&&& \ar[ru] &&&& \ar[ru] &&&& \ar[ru] &&&&
    \ar[ru] &&&& \ar[ru] && \\
  &&&&& \vdots &&&& \vdots &&&&
    \vdots &&&& \vdots &&&&
    \vdots &&&& \vdots &&&&
    \vdots \\
 }
\]
where
$P(1'),P(2'),P(9'),P(10')$
are indecomposable projective-injective $B$-modules
at the vertices $1',2',9',10'$,
respectively.
We note that $\Clu$ contains an indecomposable
injective $B^-$-module $I_{11}$,
which is not an injective $B$-module.

\subsection{Example}
\label{ex:2}

Let $A = \kk Q$ be the path algebra of the wild quiver
\[
    Q: \quad
    \xymatrix{
        1
        &
        2
        \ar@<-.5ex>_{\alpha}[l]
        \ar@<.5ex>^{\beta}[l]
        &
        3
        \ar_{\gamma}[l]
    }
\]
It has been proved in
\cite{SS2}(Example~XVIII.5.18)
that there is a tilting module
$T = T_1 \oplus T_2 \oplus T_3$
in $\mod A$, where $T_1$, $T_2$, $T_3$
are indecomposable regular modules with the dimension vectors
\begin{align*}
 \bdim T_1 &= 2\,3\,0,
 &
 \bdim T_2 &= 6\,9\,1,
 &
 \bdim T_3 &= 1\,2\,0,
\end{align*}
and the associated tilted algebra
$B = \End_A(T)$ is given by the quiver
\[
     \xymatrix@C=4pc{
        1
        &
        2
        \ar@<-.5ex>@/_1pc/_{\beta_1}[l]
        \ar_{\beta_2}[l]
        \ar@<.5ex>@/^1pc/^{\beta_3}[l]
        &
        3
        \ar@<-.5ex>@/_2pc/_{\alpha_1}[l]
        \ar@<-.25ex>@/_.5pc/_{\alpha_2}[l]
        \ar@<.25ex>@/^.5pc/^{\alpha_3}[l]
        \ar@<.5ex>@/^2pc/^{\alpha_4}[l]
}
\]
and bound by the relations
\begin{gather*}
 \alpha_1 \beta_2 = 0,
\qquad
 \alpha_1 \beta_3 = 0,
\qquad
 \alpha_4 \beta_1 = 0,
\qquad
 \alpha_4 \beta_2 = 0,
\\
 - \alpha_2 \beta_1
   = \alpha_2 \beta_2
   = - \alpha_3 \beta_2
   = - \alpha_3 \beta_1
   = \alpha_4 \beta_3 ,
\\
 \alpha_1 \beta_1
   = - \alpha_2 \beta_3
   = \alpha_3 \beta_2
   = - \alpha_3 \beta_3 .
\end{gather*}
Moreover, the regular connecting component
$\Clu_T$ of $\Gamma(\mod B)$ determined by $T$
has a canonical section
\[
     \xymatrix@C=.5pc{
        && \Hom_A(T,I_A(3)) \\
        & \Hom_A(T,I_A(2)) \ar[ru] \\
        \Hom_A(T,I_A(1)) \ar@<-.5ex>[ru] \ar@<+.5ex>[ru]
     }
\]
of type $Q_A^{\op}$ formed by the indecomposable modules
in $\mod B$ with the dimension vectors
\begin{align*}
 \bdim \Hom_A(T,I_A(1)) &= 2\,6\,1,
 &
 \bdim \Hom_A(T,I_A(2)) &= 3\,9\,2,
 &
 \bdim \Hom_A(T,I_A(3)) &= 0\,1\,0.
\end{align*}
In particular, we conclude that the simple $B$-module
$S_B(2)$ at the vertex $2$ lies in $\cC_T$.
Denote by $C$ the path algebra of the subquiver of $Q_B$
given by the vertices $1$ and $2$, and by $D$ the path algebra
of the subquiver of $Q_B$ given by the vertices $2$ and $3$.
Then $B$ is the one-point extension $B = C[R]$ of $C$
by $R = \rad P_B(3)$, and the one-point coextension
$[U]D$ of $D$ by $U = I_B(1) / \soc I_B(1)$.
Since
$\bdim P_B(3) = 2\,4\,1$
and
$\bdim I_B(1) = 1\,3\,2$,
we conclude that
$R$ is an indecomposable regular $C$-module with
$\bdim R = 2\,4$
and $U$ is an indecomposable regular $D$-module with
$\bdim U = 3\,2$.
This implies that the Auslander-Reiten quiver of $B$
has a decomposition
\[
  \Gamma(\mod B) = \cal{P}_{C} \vee \cal{R}_B^- \vee \cal{C}_{T}
                               \vee \cal{R}_B^+ \vee \cal{Q}_{D} ,
\]
where
$\cal{P}_{C}$ is the postprojective component of $\Gamma (\mod C)$,
$\cal{Q}_{D}$ is the preinjective component of $\Gamma (\mod D)$,
$\cal{R}_B^-$ consists of infinitely many regular components
of type $\Ent\mathbb{A}_{\infty}$ and a component
of the form
\[
 \xymatrix@C=.5pc@R=.5pc{
  & &&&& &&&& && \!\!\!\!\!P_B(3)\!\!\!\!\! \ar[rrdd] &&&&
    \bullet \ar[rrdd] &&&&  \bullet \ar@{-}[rd] \\
  & &&&& &&&& &&&& &&&& &&& \\
  & \bullet \ar[rrdd] &&&& \bullet \ar[rrdd] &&&&
    \!R\! \ar[rruu] \ar[rrdd] &&&& \bullet \ar[rruu] \ar[rrdd] &&&&
    \bullet \ar[rruu] \ar@{-}[rd] \\
  \ar[ru] &&&& &&&& &&&& &&&& && \\
  &&& \bullet \ar[rruu] \ar[rrdd] &&&& \bullet \ar[rruu] \ar[rrdd] &&&&
    \bullet \ar[rruu] \ar[rrdd] &&&& \bullet \ar[rruu] \ar@{-}[rd] \\
\save[] *+[c]{\cdots} \restore
  && \ar[ru] && &&&& &&&& &&&&
  &&&&
\save[] *+[c]{\cdots} \restore
  \\
  &&&&& \bullet \ar[rruu] \ar[rrdd] &&&& \bullet \ar[rruu] \ar[rrdd] &&&&
    \bullet \ar[rruu] \ar@{-}[rd] \\
  &&&& \ar[ru] && &&&& &&&& \\
  &&& &&&& \bullet \ar[rruu] \ar@{-}[rd] &&&& \bullet \ar[rruu] \ar@{-}[rd] \\
  &&&& && \ar[ru] &&&& \ar[ru] && \\
 }
\]
with stable part of type $\Ent\mathbb{A}_{\infty}$, and
$\cal{R}_B^+$ consists of infinitely many regular components
of type $\Ent\mathbb{A}_{\infty}$ and a component
of the form
\[
 \xymatrix@C=.5pc@R=.5pc{
  & \bullet \ar[rrdd] &&&& \bullet \ar[rrdd] &&&&
    \!\!\!\!I_B(1)\!\!\!\! \ar[rrdd] \\
  \ar[ru] \\
  &&& \bullet \ar[rruu] \ar[rrdd] &&&& \bullet \ar[rruu] \ar[rrdd] &&&&
    \!U\! \ar[rrdd] &&&& \bullet \ar[rrdd] &&&&
    \bullet \ar@{-}[rd] \\
  && \ar[ru] &&&& &&&& &&&& &&&& && \\
  && &&& \bullet \ar[rruu] \ar[rrdd] &&&& \bullet \ar[rruu] \ar[rrdd] &&&&
    \bullet \ar[rruu] \ar[rrdd] &&&& \bullet \ar[rruu] \ar@{-}[rd] \\
\save[] *+[c]{\cdots} \restore
  && && \ar[ru] && &&&& &&&& &&&&
  &&
\save[] *+[c]{\cdots} \restore
  \\
  && &&&&& \bullet \ar[rruu] \ar[rrdd] &&&& \bullet \ar[rruu] \ar[rrdd] &&&&
    \bullet \ar[rruu] \ar@{-}[rd] \\
  && &&&& \ar[ru] && &&&& &&&& \\
  && &&& &&&& \bullet \ar[rruu] \ar@{-}[rd] &&&& \bullet \ar[rruu] \ar@{-}[rd] \\
  && &&&& && \ar[ru] &&&& \ar[ru] && \\
 }
\]
Consider the algebras
$E = T_1^+ B$ and $F = S_1^+ B$.
Since $T_1^+ B = B[I_B(1)]$,
we conclude that the quiver $Q_E$ of $E$
is of the form.
\[
     \xymatrix@C=4pc{
        1
        &
        2
        \ar@<-.5ex>@/_1pc/_{\beta_1}[l]
        \ar_{\beta_2}[l]
        \ar@<.5ex>@/^1pc/^{\beta_3}[l]
        &
        3
        \ar@<-.5ex>@/_2pc/_{\alpha_1}[l]
        \ar@<-.25ex>@/_.5pc/_{\alpha_2}[l]
        \ar@<.25ex>@/^.5pc/^{\alpha_3}[l]
        \ar@<.5ex>@/^2pc/^{\alpha_4}[l]
        &
        1'
        \ar@<-.5ex>_{\gamma_1}[l]
        \ar@<.5ex>^{\gamma_2}[l]
}
\]
Let $H$ be the Kronecker algebra given by the arrows
$\gamma_1$ and $\gamma_2$.
Then $F$ is the one-point extension algebra $D[U]$ and
the one-point coextension algebra $[V]H$ and,
where $V = I_F(2) / \soc I_F(2)$.
We determine the $H$-module $V$.
The Cartan matrix $C_E$ of $E$ is of the form
\[
  \begin{bmatrix}
    1 & 3 & 2 & 1 \\
    0 & 1 & 4 & 3 \\
    0 & 0 & 1 & 2 \\
    0 & 0 & 0 & 1 \\
  \end{bmatrix}
  .
\]
Hence,
$\bdim I_F(2) = 1\,4\,3$,
and then
$\bdim V = 4\,3$.
Therefore, $V$ is the postprojective $H$-module
$\tau_H^{-1} P_H(2)$.
In particular, we conclude that $\Gamma(\mod F)$
has a left stable acyclic component $\cC$ of the form
\[
 \xymatrix@C=.5pc@R=.5pc{
  &&&& \bullet \ar[rrdd] &&&&  \!\!\!\!\!I_F(2)\!\!\!\! \ar[rrdd] \\
  & \ar[rd] \\
  \cdots && \bullet \ar[rruu] \ar@<-.5ex>[rrdd] \ar@<+.5ex>[rrdd] &&&&
      \bullet \ar[rruu] \ar@<-.5ex>[rrdd] \ar@<+.5ex>[rrdd] &&&&
      V \ar@<-.5ex>[rrdd] \ar@<+.5ex>[rrdd] &&&&
      \bullet \ar@<-.5ex>[rrdd] \ar@<+.5ex>[rrdd] &&&&
      \bullet \ar@<-.5ex>@{-}[rd] \ar@<+.5ex>@{-}[rd] \\
  & \ar@<-.5ex>[ru] \ar@<+.5ex>[ru] &&&& &&&& &&&& &&&& &&& \cdots \\
  &&&& \bullet \ar@<-.5ex>[rruu] \ar@<+.5ex>[rruu]
      &&&& \bullet \ar@<-.5ex>[rruu] \ar@<+.5ex>[rruu]
      &&&& \bullet \ar@<-.5ex>[rruu] \ar@<+.5ex>[rruu]
      &&&& \bullet \ar@<-.5ex>[rruu] \ar@<+.5ex>[rruu] \\
 }
\]
and consequently $F$ is a tilted algebra of the form
$\End_A(T^*)$ for a tilting module $T^*$ in $\mod A$
without nonzero preinjective direct summands such that $\Clu$
is the connecting component $\Clu_{T^*}$ of $\Gamma(\mod F)$
determined by $T^*$.

Summing up, $E$ is a wild quasiserial algebra
such that $E^- = B$ and $E^+ = F$.
Moreover, the Auslander-Reiten quiver $\Gamma(\mod E)$
has a decomposition
\[
  \Gamma(\mod E) = \cal{P}_{C} \vee \cal{R}_B^- \vee \cal{C}_{T}
                               \vee \cal{R}_E \vee \cal{C}_{T^*} \vee \cal{T}_{H} \vee \cal{Q}_{H} ,
\]
where
$\cal{T}_{H}$ is the $\mathbb{P}_1(\kk)$-family of stable tubes of rank $1$ in $\Gamma(\mod H)$,
$\cal{Q}_{H}$ is the preinjective component of $\Gamma (\mod H)$,
and $\cal{R}_E$ consists of infinitely many regular components
of type $\Ent\mathbb{A}_{\infty}$ and a component
of the form
\[
 \xymatrix@C=.5pc@R=.5pc{
  &&&& &&&&
    \!\!\!\!\!\!P_E(1')\!\!\!\!\!\! \ar[rrdd] &&&& \\
  \\
  && \bullet \ar[rrdd] &&&& \!\!\!\!I_B(1)\!\!\!\! \ar[rruu] \ar[rrdd] &&&&
    \!\!\!\!\!\!\!\!\!\!P_E(1')/S_1\!\!\!\!\!\!\!\!\!\!\! \ar[rrdd] &&&& \bullet \ar@{-}[rd] \\
  & \ar[ru] && &&&& &&&& &&&& \\
  \cdots &&&& \bullet \ar[rruu] \ar[rrdd] &&&& \!U\! \ar[rruu] \ar[rrdd] &&&&
    \bullet \ar[rruu] \ar@{-}[rd] &&&& \cdots \\
  &&& \ar[ru] && &&&& &&&& \\
  && &&&& \bullet \ar[rruu] \ar@{-}[rd] &&&& \bullet \ar[rruu] \ar@{-}[rd] \\
  &&& && \ar[ru] &&&& \ar[ru] && \\
 }
\]
with stable part of type $\Ent\mathbb{A}_{\infty}$.

We also note that the opposite algebra $B^{\op}$ of $B$
is a tilted algebra of the form $B^{\op} = \End_{A^{\op}} (D(T))$,
where $D(T)$ is a regular tilting module in $\mod A^{\op}$.
Similarly, $F^{\op}$ is a tilted algebra of the form
$F^{\op} = \End_{A^{\op}} (D(T^*))$,
where $D(T^*)$ is a regular tilting module in $\mod A^{\op}$
without nonzero postprojective direct summands.
Then we conclude that $E^{\op}$ is a wild quasiserial algebra
such that $(E^{\op})^- = F^{\op}$ and $(E^{\op})^+ = B^{\op}$.

\subsection{Example}
\label{ex:3}

Let $B$ be the algebra given by the quiver $Q$ of the form
\[
 \xymatrix{
  && 3 \ar[ld]_{\beta} && 6 \ar[ld]_{\xi} \\
  1 & 2 \ar[l]_{\delta} && 5 \ar[lu]_{\alpha} \ar[ld]^{\gamma} \\
  && 4 \ar[lu]^{\sigma} && 7 \ar[lu]^{\eta} & 8 \ar[l]^{\varrho} \\
 }
\]
bound by the relations
$\alpha\beta = \gamma\sigma$
and
$\varrho\eta\gamma\sigma = 0$.
Then $B$ is a tilted algebra of the form $\End_H(T)$,
where $H$ is the path algebra $\kk \Delta$ of the
quiver $\Delta$ of the form
\[
 \xymatrix{
  & \bullet \ar[rd] &&&& \bullet \\
  && \bullet \ar[r] & \bullet \ar[r] & \bullet \ar[ru] \ar[rd] \\
  \bullet \ar[r] & \bullet \ar[ru] &&&& \bullet
 }
\]
and $T$ is a tilting module in $\mod H$ without nonzero
postprojective direct summands (compare Example~\ref{ex:1}).
Moreover, $B$ is a branch coextension of the wild
hereditary algebra $C$ being the full convex subcategory of $B$
given by the objects $3,4,5,6,7,8$,
using the quasisimple regular $C$-module $R$ with
the dimension vector
$\bdim R = \begin{array}{*{4}{@{\,}c}@{\,}}1&&1&\vspace{-2.5mm}\\&1&&\vspace{-2.5mm}\\1&&1&0\end{array}$.
Then $1,2,3,4,5,6,7,8$ is a reflection sequence
of sinks of $Q_{B} = Q$ such that

\begin{itemize}
 \item
  $S^+_2 S^+_1 B$ is given by the quiver $\sigma^+_2 \sigma^+_1 Q$ of the form
\[
 \xymatrix{
  3 && 6 \ar[ld]_{\xi} \\
  & 5 \ar[lu]_{\alpha} \ar[ld]^{\gamma}
     && 1' \ar[lu]_{\theta} \ar[ld]^{\lambda} & 2' \ar[l]_{\delta'} \\
  4 && 7 \ar[lu]^{\eta} & 8 \ar[l]^{\varrho} \\
 }
\]
  bound by the relation
  $\theta\xi = \lambda\eta$,
  which is a tilted algebra
  of wild type $\tilde{\tilde{\mathbb{D}}}_6$,
  being a branch extension of $C$ using the module $R$.
 \item
  $S^+_4 S^+_3 S^+_2 S^+_1 B$ is given by the quiver
  $\sigma^+_4 \sigma^+_3 \sigma^+_2 \sigma^+_1 Q$ of the form
\[
 \xymatrix{
  & 6 \ar[ld]_{\xi} \\
  5 && 1' \ar[lu]_{\theta} \ar[ld]^{\lambda} & 2' \ar[l]_{\delta'}
      & 3' \ar[l]_{\beta'} \ar[ld]^(.25){\omega} \\
  & 7 \ar[lu]^{\eta} && 8 \ar[ll]^{\varrho}
      & 4' \ar[lu]_(.25){\sigma'} \ar[l]^{\varepsilon} \\
 }
\]
  bound by the relations
  $\theta\xi = \lambda\eta$,
  $\beta'\delta'\lambda = \omega\varrho$
  and
  $\sigma'\delta'\lambda = \varepsilon\varrho$,
  which is a tilted algebra
  of wild type $\tilde{\tilde{\mathbb{D}}}_6$,
  being a one-point coextension of the wild concealed full convex
  subcategory $C'$ of $S^+_4 S^+_3 S^+_2 S^+_1 B$
  given by the objects
  $6,7,8,1',2',3',4'$
  by the quasisimple regular $C'$-module $R'$ with $\bdim R'$
  having all coordinates equal $1$.
 \item
  $S^+_5 S^+_4 S^+_3 S^+_2 S^+_1 B$ is given by the quiver
  $\sigma^+_5 \sigma^+_4 \sigma^+_3 \sigma^+_2 \sigma^+_1 Q$ of the form
\[
 \xymatrix{
  6 \\
  & 1' \ar[lu]_{\theta} \ar[ld]^{\lambda} & 2' \ar[l]_{\delta'}
      & 3' \ar[l]_{\beta'} \ar[ld]^(.25){\omega}
      & \save[] +<0pc,-1.5pc> *+[c]{5'} \ar[l]_{\alpha'} \ar[ld]^{\gamma'} \restore
      \\
  7 && 8 \ar[ll]^{\varrho}
      & 4' \ar[lu]_(.25){\sigma'} \ar[l]^{\varepsilon} \\
 }
\]
  bound by the relations
  $\beta'\delta'\lambda = \omega\varrho$,
  $\sigma'\delta'\lambda = \varepsilon\varrho$,
  $\alpha'\beta' = \gamma'\sigma'$,
  $\alpha'\omega = \gamma'\varepsilon$,
  which is a tilted algebra
  of wild type $\tilde{\tilde{\mathbb{D}}}_6$,
  being a one-point extension of $C'$
  by the quasisimple regular $C'$-module $R'$.
 \item
  $S^+_8 S^+_7 S^+_6 S^+_5 S^+_4 S^+_3 S^+_2 S^+_1 B$
  is given by the quiver
  $\sigma^+_8 \sigma^+_7 \sigma^+_6 \sigma^+_5 \sigma^+_4 \sigma^+_3 \sigma^+_2 \sigma^+_1 Q$
  of the form
\[
 \xymatrix{
  && 3' \ar[ld]_{\beta'} && 6' \ar[ld]_{\xi'} \\
  1' & 2' \ar[l]_{\delta'} && 5' \ar[lu]_{\alpha'} \ar[ld]^{\gamma'} \\
  && 4' \ar[lu]^{\sigma'} && 7' \ar[lu]^{\eta'} & 8' \ar[l]^{\varrho'} \\
 }
\]
bound by the relations
$\alpha'\beta' = \gamma'\sigma'$
and
$\varrho'\eta'\gamma'\sigma' = 0$,
and hence is isomorphic to $B$.
\end{itemize}

Observe also that
$T^+_2 T^+_1 B$ is a wild quasiserial algebra
with
$(T^+_2 T^+_1 B)^- = B$
and
$(T^+_2 T^+_1 B)^+ = S^+_2 S^+_1 B$,
and
$T^+_5 S^+_4 S^+_3 S^+_2 S^+_1 B$ is a wild quasiserial algebra
with
$(T^+_5 S^+_4 S^+_3 S^+_2 S^+_1 B)^- = S^+_4 S^+_3 S^+_2 S^+_1 B$
and
$(T^+_5 S^+_4 S^+_3 S^+_2 S^+_1 B)^+ = S^+_5 S^+_4 S^+_3 S^+_2 S^+_1 B$.
Moreover,
$T^+_8 T^+_7 T^+_6 T^+_5 T^+_4 T^+_3 T^+_2 T^+_1 B$
is the duplicated algebra
\[
  \bar{B} =
  \begin{bmatrix}
    B & 0 \\
    D(B) & B
  \end{bmatrix}
\]
of $B$.
We also note that $\bar{B}$ is a tilted gluing
\[
 \bar{B}
  = (T^+_2 T^+_1 B) * (T^+_5 S^+_4 S^+_3 S^+_2 S^+_1 B)
     * (S^+_8 S^+_7 S^+_6 S^+_5 S^+_4 S^+_3 S^+_2 S^+_1 B)
     .
\]
The repetitive category $\hat{B}$ of $B$ is given by the quiver
\[
 \xymatrix{
  \save[] +<2pc,-1pc> \ar@{->}[rd] \restore
      &\vdots& \save[] +<-2pc,-1pc> \ar@{->}[ld] \restore \\
  & (m+1,5) \ar[dl]_{\alpha_{m+1}} \ar[dr]^{\gamma_{m+1}} \\
  (m+1,3) \ar[d]_{\beta_{m+1}} \ar[drr]_(.25){\omega_{m+1}} &&
         (m+1,4) \ar[dll]^(.25){\sigma_{m+1}} \ar[d]^{\varepsilon_{m+1}} \\
  (m+1,2) \ar[d]_{\delta_{m+1}} && (m,8) \ar[dd]^{\varrho_m} \\
  (m+1,1) \ar[d]_{\theta_{m+1}} \ar[drr]^{\lambda_{m+1}} \\
  (m,6) \ar[rd]_{\xi_m} && (m,7) \ar[dl]^{\eta_m} \\
  & (m,5) \ar[dl]_{\alpha_m} \ar[dr]^{\gamma_m} \\
  (m,3) \ar[d]_{\beta_m} \ar[drr]_(.25){\omega_m} && (m,4) \ar[dll]^(.25){\sigma_m} \ar[d]^{\varepsilon_m} \\
  (m,2) \ar[d]_{\delta_m} && (m-1,8) \ar[dd]^{\varrho_{m-1}} \\
  (m,1) \ar[d]_{\theta_m} \ar[drr]^{\lambda_m} \\
  (m-1,6) \ar[rd]_{\xi_{m-1}} && (m-1,7) \ar[dl]^{\eta_{m-1}} \\
  & (m-1,5) \\
  \save[] +<2pc,1pc> \ar@{-}[ru] \restore
      &\vdots& \save[] +<-2pc,1pc> \ar@{-}[lu] \restore \\
 }
\]
bound by the relations
\begin{gather*}
 \theta_m \xi_{m-1} = \lambda_m \eta_{m-1},
\qquad
 \beta_m \delta_m\lambda_m = \omega_m\varrho_{m-1},
\qquad
  \sigma_m\delta_m\lambda_m = \varepsilon_m\varrho_{m-1},
\qquad
  \alpha_m\beta_m = \gamma_m\sigma_m,
\qquad
  \alpha_m\omega_m = \gamma_m\varepsilon_m,
\\
  \varrho_m\eta_m\gamma_m\sigma_m = 0,
\qquad
  \delta_{m+1}\theta_{m+1}\xi_m\alpha_m\omega_m = 0,
\qquad
  \beta_{m+1}\delta_{m+1}\theta_{m+1}\xi_m\gamma_m = 0,
\qquad
  \varepsilon_{m+1}\varrho_m\eta_m\alpha_m\beta_m = 0,
\\
  \xi_m\alpha_m\beta_m\delta_m\lambda_m = 0,
\qquad
  \eta_m\gamma_m\sigma_m\delta_m\theta_m = 0,
\qquad
  \delta_{m+1}\theta_{m+1}\xi_m\alpha_m\beta_m\delta_m = 0.
\end{gather*}
We note that every admissible group $G$ of automorphisms
of $\hat{B}$ is an infinite cyclic group $(\nu_{\hat{B}}^r)$,
where $\nu_{\hat{B}}$
is the Nakayama automorphism of $\hat{B}$
and $r$ a positive integer.
Observe that the trivial extension algebra
$\Tensor(B) = \hat{B} / (\nu_{\hat{B}})$
is given by the quiver
\[
 \xymatrix{
  1 \ar[d]_{\theta} \ar[drr]^{\lambda} \\
  6 \ar[rd]_{\xi} && 7 \ar[dl]^{\eta} \\
  & 5 \ar[dl]_{\alpha} \ar[dr]^{\gamma} \\
  3 \ar[d]_{\beta} \ar[drr]_(.25){\omega} && 4 \ar[dll]^(.25){\sigma} \ar[d]^{\varepsilon} \\
  2 \ar@/^2pc/[uuuu]^{\delta} && 8 \ar@/_2pc/[uuu]_{\varrho} \\
 }
\]
bound by the relations
\begin{gather*}
 \theta \xi = \lambda \eta,
\qquad
 \beta \delta\lambda = \omega\varrho,
\qquad
  \sigma\delta\lambda = \varepsilon\varrho,
\qquad
  \alpha\beta = \gamma\sigma,
\qquad
  \alpha\omega = \gamma\varepsilon,
\\
  \varrho\eta\gamma\sigma = 0,
\qquad
  \delta\theta\xi\alpha\omega = 0,
\qquad
  \beta\delta\theta\xi\gamma = 0,
\qquad
  \varepsilon\varrho\eta\alpha\beta = 0,
\\
  \xi\alpha\beta\delta\lambda = 0,
\qquad
  \eta\gamma\sigma\delta\theta = 0,
\qquad
  \delta\theta\xi\alpha\beta\delta = 0.
\end{gather*}

\subsection*{Acknowledgement}
All the authors were supported by the research grant DEC-2011/02/A/ST1/00216 of the National Science Center Poland.
The first author was also partially supported by the NSERC of Canada, and the third author was also partially supported by CONICET, Argentina.

Research on this paper was carried out while the first and the third author were visiting the second.
They are grateful to him and to the whole representation theory group in Toru\'n for their kind hospitality during their stay.



\end{document}